\newtheorem{theorem}{Theorem}
\newtheorem{definition}[theorem]{Definition}
\newtheorem{proposition}[theorem]{Proposition}
\newtheorem{corollary}[theorem]{Corollary}
\newtheorem{lemma}[theorem]{Lemma}
\numberwithin{theorem}{section}
\numberwithin{equation}{section}
\begin{document}
 \title[]{Applications of optimal transport to Dyson Brownian Motions and beyond}
\author{Xuan Wu}
\address{Department of Mathematics, University of Illinois Urbana-Champaign, Altgeld Hall 
1409 W Green St, Urbana, IL 61801}
\email{xuanzw@illinois.edu}
\begin{abstract}
We develop a new method based on Caffarelli's contraction theorem in optimal transport to obtain sharp and uniform modulus of continuity estimates for $\beta$-Dyson Brownian motions with $\beta \geq 2$. Our method extends to a large class of random curve collections, which can be viewed as log-concave perturbations of Brownian motions, including the $\beta$-Dyson Brownian motion, the Air$\textup{y}_{\beta}$ line ensemble, the KPZ line ensemble, and the O'Connell-Yor line ensemble.
\end{abstract}
\maketitle

\section{Introduction}
For $\beta\geq 1$, the $\beta$-Dyson Brownian motion is the solution to the following system of stochastic differential equation (SDE)

\begin{align}\label{equ:DBM}
\displaystyle dX_j(t)=\frac{\beta}{2}  \sum_{i\neq j}\frac{dt}{X_j(t)-X_i(t)} +dB_j(t),\  1\leq j\leq N. 
\end{align}
We refer readers to \cite[Section 4.3]{MR2760897} for the existence and the uniqueness of the solution to \eqref{equ:DBM}. The $\beta$-Dyson Brownian motion is a canonical model for non-intersecting paths and is closely related to various important fields in probability, including the random matrix theory \cite{MR148397}, the KPZ universality \cite{MR1257246,MR3204148}, and the Schramm–Loewner evolution \cite{MR2004294}. 

In this paper, we develop a new method based on Caffarelli's contraction theorem in optimal transport to obtain sharp modulus of continuity estimates for $\beta$-Dyson Brownian motions. 

Our key observation is to view $\beta$-Dyson Brownian motions as log-concave perturbations of standard Brownian motions. To the best of our knowledge, this is a new perspective on the Dyson Brownian motions and has not been explored before.
Under this perspective, we are able to apply the techniques from optimal transport and to perform a direct comparison between $\beta$-Dyson Brownian motions and Brownian motions. Using this comparison, we establish sharp and explicit bounds in Theorem~\ref{thm:1}.

We write $\mathbb{W}^N=\{x\in\mathbb{R}^N\, :\, x_1>x_2>\dots>x_N\}$ for the Weyl chamber and $\overline{\mathbb{W}}^N$ for its closure. For a continuous function $z(t)$ defined on $t\in [a,b]$ and $\alpha\in (0,1)$, we write
\begin{align*}
\|z\|_{\alpha,[a,b]}=\sup_{t,s\in [a,b], t\neq s}\frac{|z(t)-z(s)|}{|t-s|^\alpha}.
\end{align*}

\begin{theorem}\label{thm:1}
Fix $\beta\geq 2$. Let $(X_1(t),X_2(t),\dots,X_N(t))$ be a $\beta$-Dyson Brownian motion starting at $x_*\in\overline{\mathbb{W}}^N$. We write $\bar{X}_j(t)$ for the mean of $X_j(t)$ and denote $\hat{X}_j(t)=X_j(t)-\bar{X}_j(t)$. Then the following statements hold.
\begin{enumerate}[label=(\roman*)]
  \item For all $p\geq 1$, $1\leq j\leq N$ and $t,s\in [0,\infty)$, we have
\begin{align*}
\mathbb{E}|\hat{X}_j(t)-\hat{X}_j(s)|^p\leq N_p|t-s|^{p/2},
\end{align*}
where $N_p=\mathbb{E}|\mathcal{N}|^p$ for a standard normal $\mathcal{N}$.
  \item There exist universal constants $C_1,C_2>0$ such that for all $0\leq a<b$, $1\leq j\leq N$ and $K\geq 0$, we have
\begin{align*} 
\mathbb{P}\left( \sup_{t,s\in [a,b], t\neq s}  \frac{|\hat{X}_j(t)-\hat{X}_j(s)|}{\sqrt{|t-s| \log (2(b-a)/|t-s|)}} > K \right)\leq C_1e^{-C_2K^2}.
\end{align*}
  \item For any $p>0$ and $\alpha\in (0,\frac{1}{2})$, there exists a constant $C(\alpha,p)>0$ such that for all $0\leq a<b$ and $1\leq j\leq N$, we have
  \begin{align*}
  \mathbb{E} \|\hat{X}_j\|_{\alpha,[a,b]}^p  \leq  C(\alpha,p)(b-a)^{p/2-\alpha p }. 
\end{align*}   
\end{enumerate}
\end{theorem}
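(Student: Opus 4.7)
The proof will rest on Caffarelli's contraction theorem applied to the finite-dimensional marginals of the $\beta$-Dyson Brownian motion. For any partition $0=t_0 < t_1 < \cdots < t_n$, I would first establish that the joint density of $(X(t_1),\ldots,X(t_n))$ is more log-concave than the $Nn$-dimensional centered Gaussian $\gamma_\Sigma$ whose covariance $\Sigma$ matches that of $N$ independent Brownian motions sampled at the same times; equivalently, the Hessian of the negative log-density dominates $\Sigma^{-1}$ in the positive semidefinite order. Writing the joint density as a product of $\beta$-DBM transition kernels starting from $x_*$, the Gaussian heat kernel part contributes exactly $\Sigma^{-1}$, the Coulombic repulsion $-\beta\sum_{i<j}\log(y_i-y_j)$ is convex on $\overline{\mathbb W}^N$ and adds only nonnegative curvature, and the threshold $\beta\ge 2$ enters through a careful accounting of the Vandermonde/Jacobian factors in the exact transition kernel.

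Granted this log-concavity, Caffarelli's theorem produces a Brenier map $T$ pushing $\gamma_\Sigma$ onto the centered joint law of $\hat X$ that is $1$-Lipschitz in the Mahalanobis metric induced by $\Sigma$. This yields a coupling $(B,\hat X)$, with $B$ a finite-dimensional Brownian path, in which every centered linear functional $\ell(\hat X)$ is dominated by the Gaussian $\ell(B)$ in the sense of convex order; in particular $\mathbb E|\ell(\hat X)|^p \le \mathbb E|\ell(B)|^p$. Taking $\ell(y)=y_j(t_k)-y_j(t_l)$ immediately gives $\mathbb E|\hat X_j(t_k)-\hat X_j(t_l)|^p \le N_p|t_k-t_l|^{p/2}$ on the grid, and path continuity plus a standard limiting argument extends (i) to all $s,t\ge 0$.

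For (ii) I would use the same coupling together with the classical fact that the L\'evy modulus $F_{a,b}(y) = \sup_{s\neq t\in[a,b]} |y(t)-y(s)| / \sqrt{|t-s|\log(2(b-a)/|t-s|)}$ of a standard Brownian motion satisfies the sub-Gaussian tail $\mathbb P(F_{a,b}(B_j)>K)\le C_1 e^{-C_2 K^2}$, provable by a Ciesielski/chaining expansion. Since $F_{a,b}$ is a supremum of normalized linear functionals of the path, the Caffarelli coupling transfers this tail bound to the centered DBM, giving (ii). Part (iii) is then a short deduction: the elementary bound $\|\hat X_j\|_{\alpha,[a,b]} \le C_\alpha (b-a)^{1/2-\alpha} F_{a,b}(\hat X_j)$ combined with integration of the tail from (ii) yields the advertised $L^p$ estimate; alternatively, (iii) follows from (i) by Kolmogorov--Chentsov.

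The main obstacle will be the log-concavity claim of the first paragraph with the sharp reference covariance $\Sigma$. Single-time marginal densities of $\beta$-DBM are well known to be log-concave for $\beta\ge 1$ via the Vandermonde factor, but the multitime joint density introduces cross-time coupling whose Hessian structure is more delicate, and here the $\beta\ge 2$ threshold genuinely emerges. I expect the correct argument to proceed either by a sequential Knothe--Rosenblatt construction exploiting conditional log-concavity of each transition step, or by a direct Hessian computation combined with a Pr\'ekopa--Leindler marginalization lemma; at $\beta=2$ the Hermitian matrix-model realization provides a shortcut via Weyl's inequality, but the general $\beta\ge 2$ case needs the full optimal-transport machinery. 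A secondary subtlety is extracting the \emph{sharp} Gaussian constant $N_p$ in (i) rather than a merely sub-Gaussian $Cp^{p/2}$; this should follow from the convex-ordering interpretation of the Brenier coupling applied to linear functionals, but requires careful articulation.
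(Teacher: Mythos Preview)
Your overall strategy---establish that the finite-dimensional marginals of $\hat X$ are log-concave perturbations of the Brownian marginals with covariance $\Sigma$, then invoke Caffarelli/Harg\'e to obtain convex domination by Brownian motion---is exactly the paper's. The deductions of (i)--(iii) from this are also essentially the same: the paper applies Harg\'e's inequality with $G(z)=|z_j(t)-z_j(s)|^p$ for (i), then uses (i) plus a chaining lemma for (ii) and the Garsia--Rodemich--Rumsey inequality for (iii), which are minor variants of what you propose.

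The gap is in your first paragraph. You plan to obtain the log-concavity by writing the multi-time density as a product of $\beta$-DBM transition kernels and analyzing its Hessian. For general $\beta$ this is not feasible: the transition kernel has no tractable closed form outside $\beta\in\{1,2,4\}$, so a ``careful accounting of Vandermonde/Jacobian factors in the exact transition kernel'' is simply not available. The paper sidesteps this by working on path space. Girsanov's theorem gives the Radon--Nikodym derivative of the $\beta$-DBM law on $[0,t_0]$ against Wiener measure explicitly as
\[
\frac{d\mu}{d\gamma}(z)=\frac{h_\beta(z(t_0))}{h_\beta(z(0))}\exp\Bigl(-\int_0^{t_0} V_\beta(z(s))\,ds\Bigr)\cdot\mathbf{1}\{z(s)\in\mathbb W^N\ \forall s\},
\]
with $h_\beta(x)=\prod_{i<j}|x_i-x_j|^{\beta/2}$ and $V_\beta(x)=\tfrac{\beta(\beta-2)}{4}\sum_{i<j}(x_i-x_j)^{-2}$. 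Both $-\log h_\beta$ and $V_\beta$ are convex on $\mathbb W^N$ precisely when $\beta\ge 2$; this is where the threshold actually enters---through the sign of the coefficient $\beta(\beta-2)$ in $V_\beta$, not through a Vandermonde/Jacobian bookkeeping. With this path-space log-concave density in hand, the finite-dimensional marginals inherit log-concave density against the Brownian marginals by conditioning on bridge endpoints and applying Pr\'ekopa's theorem to integrate out the intermediate path. This is the ``Pr\'ekopa--Leindler marginalization'' route you allude to, but the Girsanov formula is the missing ingredient that makes it go through uniformly in $\beta\ge 2$ without ever touching an explicit transition kernel.
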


Note that the right hand side upper bounds in the above theorem are uniform for every layer and are exactly the same bounds for a standard Brownian motion. {The modulus of continuity estimates for $\beta$-Dyson Brownian motions have been previously studied in \cite{landon2020edgescalinglimitdyson,huang2024convergenceframeworkairybetaline,gorin2024}  , but none of them are as sharp as those in Theorem~\ref{thm:1}.}

Besides $\beta$-Dyson Brownian motions, there are also many other interesting examples that our approach also applies, as they all share the log-concave feature. We develop a general approach applicable to any random continuous functions which can be viewed as log-concave perturbations of either Brownian motions or Brownian bridges. We denote by $\mathbf{LC}$ the collection of all these log-concave perturbations together with their distributional limits. Theorem~\ref{thm:1} holds for all members in $\mathbf{LC}$, including the following interesting examples: the Air$\textup{y}_\beta$ line ensemble, the O'Connell-Yor line ensemble \cite{MR2952082}, the KPZ line ensemble \cite{CH16,MR3439221, Wu21tightness}, the Bessel line ensemble \cite{MR4605337} and Airy line ensembles with wanders \cite{MR2642890,dimitrov2024}. We refer readers to Section~\ref{sec:2} for the detail definition of $\mathbf{LC}$ and the precise statement of the general case in Theorem~\ref{thm:2}.

\subsection{Motivation and Applications} We discuss motivation and record four applications in this subsection. Our investigation is primarily motivated by the construction of the directed landscape from Brownian last passage percolation by Dauvergne, Ortmann and Vir{\'a}g \cite{Dauvergne_2022}. A crucial ingredient in \cite{Dauvergne_2022} is a modulus of continuity estimate, established by Dauvergne and Vir{\'a}g in a separate paper \cite{MR4260467}, for the Airy line ensemble, the edge scaling limit of the $\beta$-Dyson Brownian motion with $\beta=2$, which was first described by Pr{\"a}hofer and Spohn \cite{MR1933446} (see also \cite{MR3152753,dauvergne2019uniform,MR4260467,MR4317710,Wu21tightness,aggarwal2023}
).

Let's first introduce the Air$\textup{y}_\beta$ line ensemble. Consider $\beta$-Dyson Brownian motions under the edge scaling for $\beta>0$. For $\beta>0$, let $X^{\beta,N}(t)$ be the $N$-dimensional $\beta$-Dyson Brownian motion starting at the origin. We perform the edge scaling to $X^{\beta,N}$ at $t=\frac{2}{\beta}$ by defining $\mathcal{A}^{\beta,N}(t)$ through
\begin{align}\label{equ:Airybeta}
\mathcal{A}^{\beta,N}_j(t)=N^{1/6} X^{\beta,N}_j\left(\frac{2}{\beta}  \left(1+\frac{t}{N^{1/3}} \right) \right) -2N^{2/3}-tN^{1/3}+\frac{t^2}{4},\ 1\leq j\leq N. 
\end{align}

When $N$ goes to infinity, $\mathcal{A}^{\beta,N}$ converges to $\mathcal{A}^\beta$, referred to as the Air$\textup{y}_{\beta}$ line ensemble \cite{huang2024convergenceframeworkairybetaline,landon2020edgescalinglimitdyson,gorin2024} .  
See also \cite{MR3568040} for the bulk limit.  


In the following corollary, we provide an alternative short proof for the tightness of $\mathcal{A}^{\beta,N}$ for $\beta\geq 2$.
\begin{corollary}\label{prop:4}
Fix $\beta\geq 2$. For any compact set $K\subset \mathbb{R}\times\mathbb{N}$, the marginal law of $\mathcal{A}^{\beta,N}$ on $C(K,\mathbb{R})$ is tight.
\begin{proof}
Let $m^{\beta,N}_j(t)$ be the expectation of $\mathcal{A}^{\beta,N}_j(t)$ and set 
$$\hat{\mathcal{A}}^{\beta,N}_j(t)=\sqrt{\beta/2} \big(\mathcal{A}^{\beta,N}_j(t)- m^{\beta,N}_j(t)+m^{\beta,N}_j(0)\big).$$ 
At $t=0$, $\sqrt{2/\beta}\hat{\mathcal{A}}^{\beta,N}(0)=\mathcal{A}^{\beta,N}(0)$ is the edge scaling of the Gaussian $\beta$ ensemble and its convergence has been established in \cite{MR2813333}. From Theorem~\ref{thm:1} (ii), we have 
\begin{align*}
\mathbb{P}\left( \sup_{t,s\in [a,b], t\neq s}   |\hat{\mathcal{A}}^{\beta,N}_j(t)-\hat{\mathcal{A}}^{\beta,N}_j(s)|  >K|b-a|^{1/2} \right)\leq C_1e^{-C_2K^2}.
\end{align*}
This allows us to apply \cite[Theorem 7.3]{Bil} to ensure the marginal law of $\hat{\mathcal{A}}^{\beta,N}$ on $C(K,\mathbb{R})$ is tight. From the Brownian scaling and the convergence of $\mathcal{A}^{\beta,N}(0)$, $m^{\beta,N}_j(t)-m^{\beta,N}_j(0)$ converges locally uniformly to zero. Hence we conclude $\mathcal{A}^{\beta,N}$ is tight on $C(K,\mathbb{R})$.
\end{proof}
\end{corollary}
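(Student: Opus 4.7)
The plan is to reduce to coordinate-wise tightness on a compact time interval and then to feed Theorem~\ref{thm:1}(ii) into Billingsley's tightness criterion \cite[Theorem 7.3]{Bil}. Any compact set $K\subset\mathbb{R}\times\mathbb{N}$ lies in a product $[a,b]\times\{1,\dots,M\}$, so it suffices to show that for each fixed index $i$ the law of $\mathcal{A}^{\beta,N}_i$ on $C([a,b],\mathbb{R})$ is tight. Billingsley's criterion asks for two ingredients: (a) tightness of the one-point marginal at some $t_0\in [a,b]$, and (b) a uniform-in-$N$ modulus-of-continuity estimate for $\mathcal{A}^{\beta,N}_i$ on $[a,b]$.

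For (b) I would work with the centered process $\hat{\mathcal{A}}^{\beta,N}_i(t)=\sqrt{\beta/2}\,(\mathcal{A}^{\beta,N}_i(t)-m^{\beta,N}_i(t)+m^{\beta,N}_i(0))$ and push Theorem~\ref{thm:1}(ii) through the edge scaling. Since the deterministic parabola and linear term in \eqref{equ:Airybeta} are absorbed into $m^{\beta,N}_i$, the process $\hat{\mathcal{A}}^{\beta,N}_i$ is, up to an affine time change, a constant multiple of the centered $\beta$-Dyson trajectory $\hat X^{\beta,N}_i$ from Theorem~\ref{thm:1}. Tracking the Brownian scaling through $t_{\rm orig}=\tfrac{2}{\beta}(1+t/N^{1/3})$ and the spatial rescaling $N^{1/6}$ shows that the prefactor $\sqrt{\beta/2}$ is exactly what is needed so that $\hat{\mathcal{A}}^{\beta,N}_i$ inherits a Brownian-type modulus with universal constants independent of $N$. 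Dropping the logarithmic factor, this gives $\mathbb{P}(\sup_{t,s\in[a,b]}|\hat{\mathcal{A}}^{\beta,N}_i(t)-\hat{\mathcal{A}}^{\beta,N}_i(s)|>K\sqrt{b-a})\leq C_1 e^{-C_2 K^2}$, which is more than enough for (b).

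For (a) I would take $t_0=0$, where $\mathcal{A}^{\beta,N}(0)$ is, by construction, the edge scaling of the Gaussian $\beta$-ensemble, whose convergence is established by Ram{\'\i}rez--Rider--Vir{\'a}g \cite{MR2813333}. Combined with the modulus bound of (b), this yields tightness of $\hat{\mathcal{A}}^{\beta,N}_i$ on $C([a,b],\mathbb{R})$. To transfer the tightness from $\hat{\mathcal{A}}^{\beta,N}_i$ back to $\mathcal{A}^{\beta,N}_i$, one must show that the deterministic correction $m^{\beta,N}_i(t)-m^{\beta,N}_i(0)$ converges to zero locally uniformly. Taking expectations in the modulus estimate bounds this difference by $O(\sqrt{b-a})$ uniformly in $N$, and the Brownian scaling of the underlying process combined with the convergence of $\mathcal{A}^{\beta,N}_i(0)$ forces the limit of this difference to vanish.

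The main obstacle I anticipate is really the interplay between Theorem~\ref{thm:1}, which only controls the \emph{centered} increments, and the one-point law at $t=0$, which is outside its scope. Without an external tightness input for $\mathcal{A}^{\beta,N}_i(0)$, the sub-Gaussian modulus alone cannot rule out a family that drifts uniformly to infinity. Once the Ram{\'\i}rez--Rider--Vir{\'a}g convergence is invoked for the single time $t=0$, the remainder of the argument is a clean application of Theorem~\ref{thm:1}(ii) together with Brownian scaling and Billingsley's criterion.
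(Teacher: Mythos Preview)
Your proposal is correct and follows essentially the same approach as the paper: define the centered process $\hat{\mathcal{A}}^{\beta,N}_i$ with the $\sqrt{\beta/2}$ prefactor, use Theorem~\ref{thm:1}(ii) through the edge scaling for the modulus-of-continuity input, invoke Ram{\'\i}rez--Rider--Vir{\'a}g \cite{MR2813333} for one-point tightness at $t=0$, apply \cite[Theorem 7.3]{Bil}, and then use Brownian scaling together with the convergence of $\mathcal{A}^{\beta,N}(0)$ to show $m^{\beta,N}_i(t)-m^{\beta,N}_i(0)\to 0$ locally uniformly. Your write-up is in fact a bit more explicit than the paper's about why the $\sqrt{\beta/2}$ factor is the right normalization and about where the external one-point input is indispensable.
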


Motivated by the construction of the directed landscape from the Brownian last passage percolation by Dauvergne, Ortmann and Vir{\'a}g \cite{Dauvergne_2022}, we are interested in estimates on modulus of continuity for Air$\textup{y}_\beta$ line ensembles for all layers. Unexpectedly, we are able to obtain the following uniform estimates for all layers and the estimates are exactly the same ones that can be obtained for a standard Brownian motion.

\begin{corollary}\label{prop:3}
There exist universal constants $C_1, C_2>0$ such that for any $\beta\geq 2$, $a<b$, $j\in\mathbb{N}$, and $K\geq 0$, it holds that
\begin{align}\label{equ:Airybeta2}
\mathbb{P}\left( \sup_{t,s\in [a,b], t\neq s}  \frac{|\mathcal{A}^{\beta}_j(t)-\mathcal{A}^{\beta}_j(s)|}{\sqrt{|t-s| \log (2(b-a)/|t-s|)}} >\sqrt{\frac{2}{\beta}} K \right)\leq C_1e^{-C_2K^2}.
\end{align}  
\end{corollary}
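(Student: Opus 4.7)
My plan is to transport the prelimit estimate from Theorem~\ref{thm:1}~(ii) through the edge scaling \eqref{equ:Airybeta} and then pass to the limit $N\to\infty$ using $\mathcal{A}^{\beta,N}\Rightarrow\mathcal{A}^\beta$. Fix $[a,b]$ and $j$, and let $\hat{\mathcal{A}}^{\beta,N}_j$ be the centered process introduced in the proof of Corollary~\ref{prop:4}. Since all deterministic terms in \eqref{equ:Airybeta} are absorbed by the centering $m^{\beta,N}_j(\cdot)$, a direct computation yields
\begin{equation*}
\hat{\mathcal{A}}^{\beta,N}_j(t)-\hat{\mathcal{A}}^{\beta,N}_j(s)=\sqrt{\beta/2}\,N^{1/6}\bigl[\hat{X}^{\beta,N}_j(t')-\hat{X}^{\beta,N}_j(s')\bigr],
\end{equation*}
with $t'=\tfrac{2}{\beta}(1+tN^{-1/3})$ and $s'$ defined analogously. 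Two observations make the scaling work cleanly: first, $(b'-a')/|t'-s'|=(b-a)/|t-s|$, so the logarithmic factor is scale-invariant; second, the prefactor $\sqrt{\beta/2}\,N^{1/6}$ exactly matches the ratio $\sqrt{|t-s|/|t'-s'|}$. Applying Theorem~\ref{thm:1}~(ii) to $\hat{X}^{\beta,N}_j$ on $[a',b']$ therefore yields a bound on the modulus of $\hat{\mathcal{A}}^{\beta,N}_j$ identical in form to the one in the statement of the Corollary, but with $K$ on the right, uniformly in $N$. Stripping the overall $\sqrt{\beta/2}$ factor from the definition of $\hat{\mathcal{A}}^{\beta,N}_j$ then produces the corresponding bound for $\mathcal{A}^{\beta,N}_j(\cdot)-m^{\beta,N}_j(\cdot)+m^{\beta,N}_j(0)$ with $K$ replaced by $\sqrt{2/\beta}\,K$.

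The remaining task is to push this prelimit tail estimate through the weak convergence $\mathcal{A}^{\beta,N}\Rightarrow\mathcal{A}^\beta$ established in \cite{landon2020edgescalinglimitdyson}. Since $m^{\beta,N}_j(t)-m^{\beta,N}_j(0)\to 0$ locally uniformly in $t$ (as used in the proof of Corollary~\ref{prop:4}), Slutsky's theorem upgrades this to the weak convergence of $\mathcal{A}^{\beta,N}_j-m^{\beta,N}_j+m^{\beta,N}_j(0)$ to $\mathcal{A}^\beta_j$ on $C([a,b],\mathbb{R})$. The main obstacle is that the modulus-of-continuity functional is not continuous in the uniform topology, so the continuous mapping theorem does not directly apply. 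The correct device is to observe that
\begin{equation*}
\Phi(f)=\sup_{\substack{t,s\in[a,b]\cap\mathbb{Q}\\ t\neq s}}\frac{|f(t)-f(s)|}{\sqrt{|t-s|\log(2(b-a)/|t-s|)}}
\end{equation*}
is a countable supremum of continuous functionals on $C([a,b],\mathbb{R})$, hence lower semi-continuous, so $\{\Phi>c\}$ is open. The Portmanteau theorem then gives
\begin{equation*}
\mathbb{P}\bigl(\Phi(\mathcal{A}^\beta_j)>\sqrt{2/\beta}\,K\bigr)\le \liminf_{N\to\infty}\mathbb{P}\bigl(\Phi(\mathcal{A}^{\beta,N}_j-m^{\beta,N}_j+m^{\beta,N}_j(0))>\sqrt{2/\beta}\,K\bigr)\le C_1 e^{-C_2 K^2},
\end{equation*}
and by continuity of $\mathcal{A}^\beta_j$ the rational supremum equals the full supremum over $[a,b]$, proving \eqref{equ:Airybeta2}.
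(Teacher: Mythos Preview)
Your proof is correct and takes a somewhat different route from the paper's. The paper observes that, since $\mathbf{LC}$ is closed under distributional limits by definition, $\sqrt{\beta/2}\,\mathcal{A}^\beta$ is itself a member of $\mathbf{LC}$, and then applies Theorem~\ref{thm:2}(ii) directly to the limit object, invoking the stationarity of $\mathcal{A}^\beta$ to dispense with the mean subtraction. In contrast, you work at the prelimit level: you scale Theorem~\ref{thm:1}(ii) to $\hat{\mathcal{A}}^{\beta,N}_j$, remove the centering via the local uniform convergence $m^{\beta,N}_j(\cdot)-m^{\beta,N}_j(0)\to 0$ and Slutsky, and then push the tail bound through the weak limit by hand using lower semi-continuity of $\Phi$ and Portmanteau. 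The paper's approach is cleaner because the limit-passing work (including the Portmanteau-type step) is absorbed once and for all in the framework of $\mathbf{LC}$ and the proof of Theorem~\ref{thm:2}; your argument essentially unpacks that machinery for this specific functional. A minor advantage of your route is that it does not invoke stationarity of $\mathcal{A}^\beta$ explicitly, relying instead on the convergence of the centered means already used in Corollary~\ref{prop:4}.
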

For $\beta=2$, the bound \eqref{equ:Airybeta2} was previously obtained by Dauvergne and Vir{\'a}g in \cite[Theorem 1.5]{MR4260467} with the constant $C_1=O(e^j)$, which plays an important role in the construction of the directed landscape in \cite{Dauvergne_2022}.

Our next application concerns KPZ line ensembles \cite{CH16,MR3439221}, an important model closely related to the KPZ equation \cite{KPZ}. The KPZ line ensemble plays a role for the KPZ equation similar to the one the Airy line ensemble plays for the directed landscape. It has been established that the directed landscape can be constructed by the Airy line ensemble through a deterministic map \cite{Dauvergne_2022,dauvergne2022scaling}. Analogously, it is conjectured that the fundamental solution to the KPZ equation can be constructed from the KPZ line ensemble \cite{wu2023kpzequationdirectedlandscape}. Partial progress on this conjecture has been made in \cite{wu2023kpzequationdirectedlandscape}, but a key missing component is the modulus of continuity estimate for the KPZ line ensemble. As an application of Theorem~\ref{thm:2}, we fill this gap in the corollary below.

\begin{corollary}\label{prop:5}
Fix $T>0$. Let $\mathcal{X}^T$ be the KPZ line ensemble with parameter $T$ and set $\hat{\mathcal{X}}^{T}_j(t)=\mathcal{X}^{T}_j(t)+2^{-1}t^2$. Then for any $p>0$ and $\alpha\in (0,\frac{1}{2})$, there exists a constant $C(\alpha,p)>0$ such that for all $a<b$ and $j\in \mathbb{N} $, we have  
\begin{align}\label{equ:KPZbound}
\mathbb{E} \|\hat{\mathcal{X}}^{T}_j\|_{\alpha,[a,b]}^p  \leq  C(\alpha,p)(b-a)^{p/2-\alpha p }. 
\end{align}   
\end{corollary}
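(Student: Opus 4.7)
The plan is to apply Theorem~\ref{thm:2}(iii) to the KPZ line ensemble after verifying its membership in the class $\mathbf{LC}$, and then to translate the resulting mean-centered bound into the parabolically-shifted bound in the statement. Since the introduction already lists the KPZ line ensemble as an instance of $\mathbf{LC}$, I would begin by recalling the reason: by the Corwin--Hammond construction, $\mathcal{X}^T$ satisfies the $\mathbf{H}$-Brownian Gibbs property with the convex Hamiltonian $\mathbf{H}(x)=e^x$, so that conditionally on boundary data on any rectangle, the curves are Brownian bridges reweighted by a density of the form $\exp\bigl(-\int\sum_j e^{\mathcal{X}^T_{j+1}-\mathcal{X}^T_j}\bigr)$, which is log-concave in the bridge increments. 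After matching this against the formal definition from Section~\ref{sec:2} (extending to the whole real line as a distributional limit if needed), Theorem~\ref{thm:2}(iii) yields, for any reference point $a_0\in\mathbb{R}$,
\begin{align*}
\mathbb{E}\|\hat{\mathcal{L}}_j\|^p_{\alpha,[a,b]}\leq C(\alpha,p)(b-a)^{p/2-\alpha p},
\end{align*}
where $\hat{\mathcal{L}}_j(t):=\mathcal{X}^T_j(t)-\mathcal{X}^T_j(a_0)-\overline{\mathcal{X}^T}_j(t,a_0)$ has mean zero.

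To connect $\hat{\mathcal{L}}_j$ and $\hat{\mathcal{X}}^T_j$, I would write
\begin{align*}
\hat{\mathcal{X}}^T_j(t)=\hat{\mathcal{L}}_j(t)+\mathcal{X}^T_j(a_0)+f(t),\qquad f(t):=\overline{\mathcal{X}^T}_j(t,a_0)+t^2/2.
\end{align*}
The random constant $\mathcal{X}^T_j(a_0)$ has zero $\alpha$-Hölder seminorm, so the triangle inequality combined with $(x+y)^p\leq 2^{p-1}(x^p+y^p)$ reduces \eqref{equ:KPZbound} to a deterministic estimate of the form $\|f\|_{\alpha,[a,b]}\leq C(T,\alpha)(b-a)^{1/2-\alpha}$, valid uniformly in $a<b$. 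Equivalently, I need to show that $t\mapsto\mathbb{E}[\mathcal{X}^T_j(t)]+t^2/2$ has a uniform $1/2$-Hölder modulus on $\mathbb{R}$.

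The main obstacle is precisely this deterministic regularity of the mean of the parabolically-shifted KPZ curve. For the top curve $j=1$, one expects to obtain it from the stochastic heat equation identity $\mathbb{E}[e^{\mathcal{X}^T_1(t)}]=(2\pi T)^{-1/2}e^{-t^2/(2T)}$ combined with sharp one-point moment and tail estimates for the narrow-wedge KPZ equation, as collected in \cite{CH16,wu2023kpzequationdirectedlandscape}, under the paper's rescaling convention that makes the correction $+t^2/2$ the natural one. Lower curves $j\geq 2$ can then be treated by resampling through the Brownian Gibbs property on short windows and bootstrapping from the $j=1$ estimate; this is the standard route for transferring regularity from the top curve down the ensemble. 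Once this deterministic input is in hand, the remainder of the proof is essentially bookkeeping built on top of Theorem~\ref{thm:2}(iii).
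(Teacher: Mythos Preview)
Your overall strategy---verify $\mathbf{LC}$ membership, apply Theorem~\ref{thm:2}(iii), then handle the mean correction---matches the paper's, but both steps are executed differently, and in the second step you are missing the key simplification.

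For $\mathbf{LC}$ membership, the paper does not go through the $\mathbf{H}$-Brownian Gibbs property. The Gibbs property only tells you that the \emph{conditional} law on a rectangle, given random boundary and lower-curve data, lies in $\mathring{\mathfrak{M}}$; the unconditional law is a mixture over that random data, and $\mathfrak{M}$ is defined as the weak-limit closure of $\mathring{\mathfrak{M}}$, not its convex hull, so your ``matching'' step is not as formal as it sounds. The paper instead invokes Proposition~\ref{prop:OY}: the O'Connell--Yor line ensemble with index $0$ is in $\mathbf{LC}$, and $\mathcal{X}^T$ is its distributional limit \cite{CH16,Nic21}, so $\mathcal{X}^T\in\mathbf{LC}$ because $\mathbf{LC}$ is closed under such limits.

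For the mean correction, you identify as the ``main obstacle'' showing that $t\mapsto\mathbb{E}[\mathcal{X}^T_j(t)]+t^2/2$ is uniformly $1/2$-H\"older, and you propose one-point tail estimates for $j=1$ followed by a Gibbs bootstrap for $j\geq 2$. This is a large detour and would at best produce a $T$-dependent constant, which the statement does not allow. The point you are missing is that the KPZ line ensemble is \emph{stationary along the parabola} $-t^2/2$: the process $\hat{\mathcal{X}}^T_j(t)=\mathcal{X}^T_j(t)+t^2/2$ is stationary in $t$. Hence $\mathbb{E}[\hat{\mathcal{X}}^T_j(t)]$ is constant, so in your notation $f(t)\equiv a_0^2/2$ and $\|f\|_{\alpha,[a,b]}=0$. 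Equivalently, $\hat{\mathcal{L}}_j(t)=\hat{\mathcal{X}}^T_j(t)-\hat{\mathcal{X}}^T_j(a_0)$, the two functions share the same H\"older seminorm, and \eqref{equ:KPZbound} is literally Theorem~\ref{thm:2}(iii). There is no deterministic obstacle to overcome.
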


Corollary~\ref{prop:5} can be used to confirm the Conjecture 1.12 in \cite{wu2023kpzequationdirectedlandscape}.

{\color{black}
The following corollary concerns the log-concavity of finite-dimensional marginal distributions for the $\beta$-Dyson Brownian motion, the Airy$_\beta$ line ensemble, both with $\beta \geq 2$, the O'Connell-Yor line ensemble, and the KPZ line ensemble.

\begin{corollary}\label{cor:finite_dim_logconcave}
The finite-dimensional marginal distributions for the $\beta$-Dyson Brownian motion, the Airy$_\beta$ line ensemble, both with $\beta \geq 2$, the O'Connell-Yor line ensemble, and the KPZ line ensemble are log-concave.
\end{corollary}

The log-concavity for the Airy line ensemble was recently proved in \cite{baslingker}. In Corollary~\ref{cor:arginal_logconcave}, we prove that finite-dimensional marginal distributions for any line ensembles in $\mathbf{LC}$ are log-concave. We show that the $\beta$-Dyson Brownian motion with $\beta\geq 2$ and the O'Connell-Yor line ensemble are in $\mathbf{LC}$ in Propositions~\ref{prop:DBM} and \ref{prop:OY} respectively. The Airy$_\beta$ line ensembles and the KPZ line ensemble are the scaling limits of the $\beta$-Dyson Brownian motions \cite{landon2020edgescalinglimitdyson,huang2024convergenceframeworkairybetaline,gorin2024} and the O'Connell-Yor line ensemble \cite{CH16,Nic21} respectively. Since $\mathbf{LC}$ is closed under convergence in distribution, the Airy$_\beta$ line ensemble with $\beta\geq 2$ and the KPZ line ensemble are also in the class $\mathbf{LC}$.
}

\subsection{Idea of Proofs}\label{subsec:1}
In this subsection, we explain the proof ideas. We begin with explaining the concept of a log-concave perturbation of Brownian motions, with $\beta$-Dyson Brownian motions as the primary example.

The $\beta$-Dyson Brownian motions can be constructed through the Girsanov theorem as follows. Let $X(t)=(X_1(t),X_2(t),\dots,X_N(t))$ be independent Brownian motions starting at $x_*\in\mathbb{W}^N$ defined on a filtered probability space $(\Omega,\mathcal{F}_t, \mathbb{P}_0)$. For $\beta\geq 1$, there is a non-negative martingale $M_\beta(t)$ such that $X(t)$ is a $\beta$-Dyson Brownian motion with respect to $M_\beta\cdot\mathbb{P}_0$, the probability measure obtained after tilting $\mathbb{P}_0$ by $M_\beta(t)$. The martingale $M_\beta(t)$ takes the following form. If $X(s)\notin\mathbb{W}^N\ \textup{for some}\ s\in [0,t]$, then $M_\beta(t)=0$. If $X(s)\in\mathbb{W}^N\ \textup{for all}\ s\in [0,t]$, then
\begin{align}\label{equ:Mbeta}
M_\beta(t)= \frac{h_\beta(X(t))}{h_\beta(X(0))}\exp\left(-\int_0^t V_\beta(X(s))\, ds\right),
\end{align}
where $h_\beta$ and $V_\beta$ are given by
\begin{align}\label{def:h_beta}
h_\beta(x)=\left\{\begin{array}{cc}
\prod_{1\leq i<j\leq N}|x_i-x_j|^{\beta/2}, & x\in\mathbb{W}^N,\\
0, & x\notin \mathbb{W}^N,
\end{array} \right.
\end{align}
and
\begin{align}\label{def:V_beta}
V_\beta(x)=\left\{ \begin{array}{cc}
\frac{\beta(\beta-2)}{4}\sum_{1\leq i<j\leq N}\frac{1}{(x_i-x_j)^2}, & x\in\mathbb{W}^N,\\
\infty, & x\notin \mathbb{W}^N.
\end{array} \right.
\end{align}
When $\beta\geq 2$, it can be checked that $V_\beta$ and $-\log h_\beta$ are convex functions on $\mathbb{W}^N$. Therefore, $\beta$-Dyson Brownian motions can be viewed as log-concave perturbations of independent Brownian motions. Another interesting example of a log-concave perturbation of Brownian motion is the diffusion process given by
\begin{align}\label{equ:OC}
dX_j(t)= \nabla_j\log  \psi_0 (X(t))dt+dB_j(t),\ 1\leq j\leq N,
\end{align}
where $\psi_0$ is the $\mathfrak{gl}_{N}$-Whittaker function which solves
$$ \frac{1}{2}\Delta\psi_0(x)=\left(\sum_{j=1}^{N-1}e^{x_{j+1}-x_j}\right)\psi_0(x). $$
The SDE \eqref{equ:OC} appears in the context of O'Connell-Yor directed polymer \cite{MR1865759} and is related to the quantum Toda lattice \cite{MR2952082}. Similar to \eqref{equ:DBM}, \eqref{equ:OC} can be solved through the Girsanov theorem. The associated martingale is similar to $M_\beta(t)$ in \eqref{equ:Mbeta}, with $h_\beta$ replaced by $\psi_0$ and $V_\beta$ replaced by $\sum_{j=1}^{N-1}e^{x_{j+1}-x_j}$. In general, for a positive function $u(x)$ with $-\log u(x)$ and $\Delta u(x)/(2u(x))$ both being convex, a solution to the SDE
\begin{align}\label{equ:u}
dX_j(t)=\nabla_j\log  u (X(t))dt+dB_j(t),\ 1\leq j\leq N
\end{align}
can be viewed as a log-concave perturbation of Brownian motions provided the associated local martingale is a true martingale.


Next, we explain the idea of proving Theorem~\ref{thm:2}. Let $\hat{\gamma}$ be a finite-dimensional Gaussian measure on $\mathbb{R}^d$ and $\hat{\mu}$ be a log-concave perturbation of $\hat{\gamma}$. In other words, $\hat{\mu}$ is absolutely continuous with respect to $\hat{\gamma}$ and $d\hat{\mu}/d\hat{\gamma}$ is log-concave. Caffarelli’s contraction theorem \cite[Theorem 11]{MR1800860} says that the Brenier map $T:\mathbb{R}^d\to\mathbb{R}^d$ which pushes forward $\hat{\gamma}$ onto $\hat{\mu}$ is a contraction. In other words, $|T(w_1)-T(w_2)|\leq |w_1-w_2|$ for all $w_1,w_2\in \mathbb{R}^d$. This directly implies that for $g(w)=|w|^2$, one has
\begin{equation}\label{equ:comparison}
\mathbb{E}_{\hat{\mu}}[g(w-\bar{w}_{\mu})]\leq \mathbb{E}_{\hat{\gamma}}[g(w-\bar{w}_{\gamma})],
\end{equation}  
where $\bar{w}_{\mu}$ and $\bar{w}_{\gamma}$ are the mean values of $w$ with respect to $\hat{\mu}$ and $\hat{\gamma}$ respectively. Harg{\'e} extended the estimate \eqref{equ:comparison} and showed that it holds true for any convex function $g$ \cite[Theorem 1.1]{MR2095937}. In our setting, the law of independent Brownian motions/bridges is a Gaussian measure on an infinite-dimensional space. Even though \eqref{equ:comparison} does not apply to the infinite-dimensional setting, our crucial observation is that log-concavity is preserved in taking a finite-dimensional marginal. Based on this observation, we establish a direct comparison between Brownian motions/bridges and their log-concave perturbations. We note that optimal transport maps on Wiener spaces have been constructed \cite{MR2036490} and some of the contraction properties have been studied in \cite{MR4797372}. 

\subsection*{Notation}
For integers $k,\ell\in\mathbb{Z}$ with $k\leq \ell$, we denote $\llbracket k, \ell \rrbracket=\mathbb{Z}\cap [k,\ell]$. We write $\mathbb{R}^{\llbracket k, \ell \rrbracket}$ for the collection $\{x=(x_k,x_{k+1},\dots,x_{\ell})\, |\, x_j\in\mathbb{R},\ j\in\llbracket k, \ell \rrbracket\}$. We use $C(\llbracket k, \ell \rrbracket\times [a,b],\mathbb{R})$ to denote the collection of continuous functions defined on $\llbracket k, \ell \rrbracket\times [a,b]$. For $z\in C(\llbracket k, \ell \rrbracket\times [a,b],\mathbb{R})$, $j\in\llbracket k,\ell \rrbracket$, and $t\in [a,b]$, we often write $z_j(t)=z(j,t)$ and $z(t)=(z_k(t),z_{k+1}(t),\dots, z_{\ell}(t))\in\mathbb{R}^{\llbracket k, \ell \rrbracket}$. We use $C_0(\llbracket k, \ell \rrbracket\times [a,b],\mathbb{R})$ to denote the subset of $C(\llbracket k, \ell \rrbracket\times [a,b],\mathbb{R})$ in which $z(a)=0\in\mathbb{R}^{\llbracket k, \ell \rrbracket}$. We similarly use $C_{0,0}(\llbracket k, \ell \rrbracket\times [a,b],\mathbb{R})$ to denote the subset of $C(\llbracket k, \ell \rrbracket\times [a,b],\mathbb{R})$ in which $z(a)=z(b)=0\in\mathbb{R}^{\llbracket k, \ell \rrbracket}$. 

\subsection*{Acknowledgment} The author thanks Promit Ghosal for many helpful discussions and thanks Erik Bates, Evgeni Dimitrov, Li-Cheng Tsai and Lingfu Zhang for many valuable comments on an early draft of the paper. The author is partially supported by the NSF through NSF-2348188 and by the Simons Foundation through MPS-TSM-00007939.

\section{The class $\mathbf{LC}$ and the main result}\label{sec:2}
In this section, we define the class $\mathbf{LC}$ and prove the main result, Theorem~\ref{thm:2}. The class $\mathbf{LC}$ can be interpreted as a collection of random continuous functions obtained by reweighting Brownian motions or Brownian bridges by a log-concave weight. We begin by introducing the notation for the law of Brownian motions and Brownian bridges. We denote $\mathsf{s}=(R,x,\eta(t))$ as a triple that consists of the data:

\begin{itemize}
\item $R=\llbracket k,\ell \rrbracket\times [a,b]\subset \mathbb{Z}\times\mathbb{R}$,
\item a vector $x=(x_k,x_{k+1},\dots,x_\ell)$ in $\mathbb{R}^{\llbracket k,\ell \rrbracket}$,
\item a continuous function $\eta\in C_0(\llbracket k,\ell  \rrbracket\times [a,b],\mathbb{R})$.
\end{itemize}

For a triple $\mathsf{s}=(R,x,\eta(t))$, we define the shifted Wiener measure $\gamma_{\mathsf{s}}$ as follows. Let $\{B_j(t)\}_{j\in \llbracket k,\ell \rrbracket}$ be independent Brownian motions on $t\in [a,b]$ with diffusion constant $1$ and $B_j(a)=0$. We denote by $\gamma_{\mathsf{s}}$ the law of $\{B_j(t)+x_j+\eta_j(t)\}_{j\in \llbracket k,\ell \rrbracket}$, and view $\gamma_{\mathsf{s}}$ as a Borel probability measure on $C(\llbracket k,\ell  \rrbracket\times [a,b],\mathbb{R})$. For fixed $R=\llbracket k, \ell \rrbracket\times [a,b]$, we denote by $\mathsf{S}^{R}$ the collection of all triples $\mathsf{s}$ of the form $\mathsf{s}=(R,x,\eta(t))$. The bridge counterpart is defined similarly. Denote $\breve{\mathsf{s}}=(R,x,y,\breve{\eta}(t))$ as a tuple that consists of the data:

\begin{itemize}
\item $R=\llbracket k,\ell \rrbracket\times [a,b]\subset \mathbb{Z}\times\mathbb{R}$,
\item two vectors $x=(x_k,x_{k+1},\dots,x_\ell)$ and $y=(y_k,y_{k+1},\dots,y_\ell)$ in $\mathbb{R}^{\llbracket k,\ell \rrbracket}$,
\item a continuous function $\breve{\eta}\in C_{0,0}(\llbracket k,\ell  \rrbracket\times [a,b],\mathbb{R})$.
\end{itemize}

Let $\{\breve{B}_j(t)\}_{j\in \llbracket k,\ell \rrbracket}$ be independent Brownian bridges on $t\in [a,b]$ with $\breve{B}_j(a)=\breve{B}_j(b)=0$. We denote by $\gamma_{\breve{\mathsf{s}}}$ the law of $\{\breve{B}_j(t)+\frac{(b-t)x_j+(t-a)y_j}{b-a} +\breve{\eta}_j(t)\}_{j\in \llbracket k,\ell \rrbracket}$, and also view $\gamma_{\breve{\mathsf{s}}}$ as a Borel probability measure on $C(\llbracket k,\ell  \rrbracket\times [a,b],\mathbb{R})$. For fixed $R=\llbracket k, \ell \rrbracket\times [a,b]$, we denote by $\breve{\mathsf{S}}^{R}$ the collection of all tuples $\breve{\mathsf{s}}$ of the form $\breve{\mathsf{s}}=(R,x,y,\breve{\eta}(t))$.

Next, we introduce log-concave weights of the form $e^{-\mathbf{H}(z)}$ for a convex Hamiltonian $\mathbf{H}$. Fix $R=\llbracket k,\ell \rrbracket\times [a,b]$ and consider the following data:

\begin{itemize}
\item $t_1,t_2,\dots,t_m\in [a,b]$ for some $m\in\mathbb{N}_0$,
\item convex functions $f_i:\mathbb{R}^{\llbracket k,\ell \rrbracket}\to \mathbb{R}$ for $1\leq i\leq m$,
\item a continuous function $F(t,x)$ for $(t,x)\in [a,b]\times\mathbb{R}^{\llbracket k,\ell \rrbracket}$ which is convex in $\mathbb{R}^{\llbracket k,\ell \rrbracket}$ for each fixed $t$.
\end{itemize}

The corresponding Hamiltonian is a convex functional on $C(\llbracket k,\ell \rrbracket\times [a,b],\mathbb{R})$ given by
\begin{align}\label{equ:H}
\mathbf{H}(z)= \sum_{i=1}^m f_i(z(t_i))+\int_a^b F(t,z(t))\, dt.
\end{align}
We write $\mathsf{H}^R$ for the collection of all Hamiltonians of the form \eqref{equ:H} in which $f_i$ and $F$ satisfy the convexity assumptions above. We use $\mathsf{H}^R_+$ to denote the subset of $\mathsf{H}^R$ in which $\mathbf{H}(z)\geq 0$ for all $z\in C(\llbracket k,\ell \rrbracket\times [a,b],\mathbb{R})$. 

We consider probability measures proportional to $e^{-\mathbf{H}(z)}\gamma_{\mathsf{s}}$ or $e^{-\mathbf{H}(z)}\gamma_{\breve{\mathsf{s}}}$. To ensure these are well-defined, we show the finiteness of the total integrals of $e^{-\mathbf{H}(z)}\gamma_{\mathsf{s}}$ and $e^{-\mathbf{H}(z)}\gamma_{\breve{\mathsf{s}}}$ in the following two lemmas.

\begin{lemma}\label{lem:conti_supp}
Let $F(t,x)$ be a continuous function for $(t,x)\in [a,b]\times\mathbb{R}^{\llbracket k,\ell \rrbracket}$ which is convex in $\mathbb{R}^{\llbracket k,\ell \rrbracket}$ for each fixed $t$. Then there exist a continuous function $A:[a,b]\to\mathbb{R}^{\llbracket k,\ell \rrbracket}$ and $c_0\in\mathbb{R}$ such that $F(t,x)\geq \left\langle A(t),x \right\rangle+c_0$ for all $(t,x)\in [a,b]\times \mathbb{R}^{\llbracket k,\ell \rrbracket}$.
\begin{proof}
Let $c_0=\min_{t\in [a,b]} F(t,0)-1$. For each $t\in [a,b]$, the convexity of $F(t,\cdot)$ implies there exists a unique $x(t)\in\mathbb{R}^{\llbracket k,\ell \rrbracket}$ that minimizes $|x|^2+(F(t,x)-c_0)^2$. The uniqueness of $x(t)$ and the continuity of $F$ ensure $x(t)$ is continuous in $t\in [a,b]$. The convexity of $F(t,\cdot)$ and the minimizing property of $x(t)$ then imply $F(t,x)\geq \left\langle A(t),x \right\rangle+c_0$ for $A(t)=\frac{-x(t)}{F(t,x(t))-c_0}$.
\end{proof}
\end{lemma}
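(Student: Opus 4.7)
The plan is to construct a continuous affine minorant of the family $\{F(t,\cdot)\}_{t\in[a,b]}$ by selecting a subgradient $A(t)\in\partial F(t,\cdot)(0)$ that depends continuously on $t$. Once such $A(t)$ is available, the subgradient inequality
\[
F(t,x)\geq F(t,0)+\langle A(t),x\rangle
\]
combined with $c_0:=\min_{t\in[a,b]}F(t,0)$ (finite by continuity of $F$ on $[a,b]\times\{0\}$ and compactness of $[a,b]$) yields the claim. The nontrivial point is obtaining a continuous selection: naive choices such as the minimum-norm subgradient can jump at parameters where $\partial F(t,\cdot)(0)$ widens, so some regularization is needed.

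To produce a continuous selection I would smooth $F$ in $x$ via an inf-convolution with a quadratic. Fix any $\epsilon>0$ and set
\[
F_\epsilon(t,x)=\inf_{y\in\mathbb{R}^{\llbracket k,\ell\rrbracket}}\Bigl\{F(t,y)+\tfrac{1}{2\epsilon}|y-x|^2\Bigr\}.
\]
For each fixed $t$ the objective is strictly convex and coercive in $y$ (using that a finite-valued convex function on $\mathbb{R}^{\llbracket k,\ell\rrbracket}$ admits an affine minorant at $0$), so the infimum is attained at a unique $y^\ast(t,x)$. Standard Moreau--Yosida theory then gives that $F_\epsilon(t,\cdot)$ is convex and $C^1$, with $\nabla_x F_\epsilon(t,x)=\epsilon^{-1}(x-y^\ast(t,x))$, and that $F_\epsilon\leq F$ pointwise.

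Next I would verify continuity of $A(t):=\nabla_x F_\epsilon(t,0)=-\epsilon^{-1}y^\ast(t,0)$ in $t$ by a subsequence argument: comparing with the candidate $y=0$ and using local bounds on $F(\cdot,0)$ coming from joint continuity of $F$ keeps $y^\ast(t,0)$ in a bounded set locally in $t$, and any subsequential limit satisfies the same first-order optimality condition at the limit parameter, hence equals $y^\ast(t,0)$ by uniqueness. With $A(t)$ continuous, the convex gradient inequality for $F_\epsilon(t,\cdot)$ at $0$ together with $F\geq F_\epsilon$ gives
\[
F(t,x)\geq F_\epsilon(t,0)+\langle A(t),x\rangle,
\]
and taking $c_0:=\min_{t\in[a,b]}F_\epsilon(t,0)$ (finite by continuity of $t\mapsto F_\epsilon(t,0)$ and compactness of $[a,b]$) finishes the proof. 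The main obstacle to execute carefully is the continuous dependence of $y^\ast(\cdot,0)$; the rest is routine convex analysis.
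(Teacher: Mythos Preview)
Your argument is correct and takes a genuinely different route from the paper's. You regularize in $x$ via the Moreau--Yosida envelope $F_\epsilon(t,\cdot)$ and read off a continuous subgradient selection $A(t)=\nabla_x F_\epsilon(t,0)=-\epsilon^{-1}y^\ast(t,0)$, then bound $F\ge F_\epsilon$ from below by the tangent line of $F_\epsilon$ at $0$. The paper instead works one dimension higher: it fixes $c_0<\min_t F(t,0)$ and, for each $t$, takes the Euclidean projection of $(0,c_0)\in\mathbb{R}^{\llbracket k,\ell\rrbracket}\times\mathbb{R}$ onto the (closed convex) epigraph of $F(t,\cdot)$. The projected point lies on the graph, say at $(x(t),F(t,x(t)))$, and the supporting hyperplane there yields $A(t)=-x(t)/(F(t,x(t))-c_0)$; uniqueness of metric projection plus joint continuity of $F$ gives continuity of $x(t)$ and hence of $A(t)$. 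Both constructions hinge on the same mechanism---a strictly convex auxiliary minimization whose unique minimizer varies continuously with $t$---but yours invokes a standard named tool (Moreau--Yosida) and produces a subgradient of $F(t,\cdot)$ at the proximal point $y^\ast(t,0)$, while the paper's epigraph projection is slightly more bare-hands and geometric. One small point worth tightening in your write-up: local boundedness of $y^\ast(t,0)$ does not follow from the upper bound $F_\epsilon(t,0)\le F(t,0)$ alone; you also need a lower bound on $F(t_n,\cdot)$ along the candidate minimizers, which you can get by combining convexity in $x$ with the joint continuity of $F$ on compact sets (e.g., pull a diverging $y^\ast(t_n,0)$ back to a fixed sphere by convexity and contradict boundedness of $F$ there).
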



\begin{lemma}\label{lem:mu0}
Fix $R=\llbracket k,\ell \rrbracket\times [a,b]\subset \mathbb{Z}\times \mathbb{R}$, $\eta\in C_0(\llbracket k,\ell \rrbracket\times [a,b],\mathbb{R})$, and $\mathbf{H}\in\mathsf{H}^{R}$. There exists $\eta_*(t)\in C_0(\llbracket k,\ell \rrbracket\times [a,b],\mathbb{R})$ such that the following holds. For any $\mathsf{s}(x)=(R,x,\eta(t))\in\mathsf{S}^{R}$, we have 
\begin{equation}\label{equ:eHbdd_0}
e^{-\mathbf{H}(z)}\gamma_{\mathsf{s}(x)}=Z(x)e^{- \mathbf{H}_+(z)}\gamma_{\mathsf{s}_+(x)},
\end{equation}
where $\mathsf{s}_+(x)=(R,x,\eta(t)+\eta_*(t))$, $\mathbf{H}_+\in\mathsf{H}^R_+$, and $Z(x)\in (0,\infty)$. Moreover, $\log Z(x)$ is an affine function in $x$.


Similarly, fix $\breve{\eta}(t)\in C_{0,0}(\llbracket k,\ell \rrbracket\times [a,b],\mathbb{R})$. There exists $\breve{\eta}_*(t)\in C_{0,0}(\llbracket k,\ell \rrbracket\times [a,b],\mathbb{R})$ such that the following holds. For any $\breve{\mathsf{s}}(x,y)=(R,x,y,\breve{\eta}(t))\in\breve{\mathsf{S}}^{R}$, we have 
\begin{equation}\label{equ:eHbdd2_0}
e^{-\mathbf{H}(z)}\gamma_{\breve{\mathsf{s}}(x,y)}=\breve{Z}(x,y)e^{- \mathbf{H}_+(z)}\gamma_{\breve{\mathsf{s}}_+(x,y)},
\end{equation}
where $\breve{\mathsf{s}}_+(x,y)=(R,x,y,\breve{\eta}(t)+\breve{\eta}_*(t))$, $\mathbf{H}_+\in\mathsf{H}^R_+$, and $\breve{Z}(x,y)\in (0,\infty)$. Moreover, $\log \breve{Z}(x,y)$ is an affine function in $(x,y)$. 

\begin{proof}
Let $\mathbf{H}$ be given by the form \eqref{equ:H}. By the convexity of $f_i$, $f_i(x)\geq \left\langle y_i,x \right\rangle+c_i$ for some $y_i\in\mathbb{R}^{\llbracket k,\ell \rrbracket }$ and $c_i\in\mathbb{R}$. From Lemma~\ref{lem:conti_supp}, $F(t,x)\geq \left\langle A(t),x \right\rangle+c_0$ for some continuous vector $A(t)$ and $c_0\in\mathbb{R}$.  Let 
$$\mathring{\mathbf{H}}(z)= \sum_{i=1}^m\left(  \left\langle y_i, z(t_i)\right\rangle+c_i\right) +\int_a^b \left\langle A(t),z(t)\right\rangle +c_0 \, dt.$$
Then ${\mathbf{H}}_+= \mathbf{H}-\mathring{\mathbf{H}} $ is convex and non-negative. Therefore ${\mathbf{H}}_+\in\mathsf{H}^R_+$. The Cameron-Martin theory \cite[Chapter 8]{MR1474726} implies there exists $\eta_*(t)\in C_0(\llbracket k,\ell \rrbracket\times [a,b],\mathbb{R})$ such that for any $\mathsf{s}(x)=(R,x,\eta(t))\in\mathsf{S}^R$, we have
$$ \gamma_{  \mathsf{s}_+(x)} = \frac{1}{Z(x)} e^{-\mathring{\mathbf{H}}(z)}\gamma_{\mathsf{s}(x)},$$
where $\mathsf{s}_+(x)=(R,x,\eta(t)+\eta_*(t))$ and $Z(x)\in (0,\infty)$. Then \eqref{equ:eHbdd_0} follows by rearranging terms. To show $\log Z(x)$ is an affine function, we write $\mathbb{E}_x$ for the expectation with respect to $\gamma_{\mathsf{s}(x)}$. Then
\begin{align*}
Z(x)=\mathbb{E}_x[e^{-\mathring{\mathbf{H}}(z)}]=\mathbb{E}_0[e^{-\mathring{\mathbf{H}}(z+x)}]=\mathbb{E}_0[e^{-\mathring{\mathbf{H}}(z)}]\cdot \exp\left( -\sum_{i=1}^m \left\langle y_i,x \right\rangle-\int_a^b \left\langle A(t),x \right\rangle\, dt \right).
\end{align*}
This ensures $\log Z(x)$ is an affine function in $x$. The proof of the bridge counterpart is similar, and we omit the details.

\end{proof} 
\end{lemma}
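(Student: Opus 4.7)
The plan is to split $\mathbf{H}=\mathbf{H}_++\mathring{\mathbf{H}}$, where $\mathring{\mathbf{H}}$ is an \emph{affine} functional of the path $z$, and then invoke the Cameron--Martin theorem to absorb the factor $e^{-\mathring{\mathbf{H}}(z)}$ into a deterministic drift shift of the underlying shifted Wiener measure.

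First I would extract affine minorants of the convex ingredients of $\mathbf{H}$. For each finite-time convex potential $f_i$, convexity gives a subgradient $y_i\in\mathbb{R}^{\llbracket k,\ell\rrbracket}$ at the origin with $f_i(u)\geq \langle y_i,u\rangle+c_i$. For the continuous running potential $F(t,\cdot)$, Lemma~\ref{lem:conti_supp} supplies a \emph{continuous} vector-valued function $A(t)$ and $c_0\in\mathbb{R}$ with $F(t,u)\geq\langle A(t),u\rangle+c_0$. Setting
\begin{align*}
\mathring{\mathbf{H}}(z)=\sum_{i=1}^{m}\bigl(\langle y_i,z(t_i)\rangle+c_i\bigr)+\int_a^b\bigl(\langle A(t),z(t)\rangle+c_0\bigr)\,dt,
\end{align*}
the difference $\mathbf{H}_+:=\mathbf{H}-\mathring{\mathbf{H}}$ is convex (convex minus affine) and pointwise non-negative by construction, hence it lies in $\mathsf{H}^R_+$.

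Next I would identify $e^{-\mathring{\mathbf{H}}(z)}\gamma_{\mathsf{s}(x)}$ with a positive multiple of a shifted Wiener measure. Writing $z_j(t)=B_j(t)+x_j+\eta_j(t)$ and expanding each pairing linearly, $\mathring{\mathbf{H}}(z)$ decomposes into a linear functional $L(B)$ of the driving Brownian path, a term $\langle v,x\rangle$ with $v=\sum_i y_i+\int_a^b A(t)\,dt$, and a constant $C_\eta$ depending only on $\eta$. Integrating $L$ by parts expresses it as a Wiener integral $\int_a^b\langle g(t),dB_t\rangle$ for an explicit deterministic $g(t)$ built from the $y_i$, the $t_i$, and $A(\cdot)$; hence by Cameron--Martin the normalized density $e^{-L(B)}/\mathbb{E}_0[e^{-L(B)}]$ is the Radon--Nikodym derivative turning the law of $B$ into the law of $B+\eta_*$, where $\eta_*(t)=-\int_a^t g(s)\,ds$ is continuous, vanishes at $a$, and crucially does not depend on $x$. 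Setting $\mathsf{s}_+(x)=(R,x,\eta+\eta_*)$ and combining with $e^{-\mathbf{H}_+(z)}$ yields the desired identity \eqref{equ:eHbdd_0}. Collecting the normalization gives $Z(x)=\exp(-\langle v,x\rangle-C_\eta)\cdot\mathbb{E}_0[e^{-L(B)}]$, which is manifestly a log-affine function of $x$.

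For the bridge version the recipe is the same: the analogous Cameron--Martin theorem for independent Brownian bridges produces a deterministic continuous drift $\breve{\eta}_*$ that automatically vanishes at both endpoints (so $\breve{\eta}_*\in C_{0,0}$), and the bookkeeping that isolates the $(x,y)$-dependence in $\mathring{\mathbf{H}}$ proceeds identically, yielding affineness of $\log\breve{Z}(x,y)$. The main obstacle I anticipate is the careful separation of variables so that $\eta_*$ (respectively $\breve{\eta}_*$) is produced once and for all, independent of $x$ (respectively $(x,y)$); this hinges on the fact that the $x$-dependence in $\mathring{\mathbf{H}}(z)$ is purely through the explicit scalar pairings $\langle v,x\rangle$, which factor out of the Girsanov exponential, while the continuity of $A(t)$ guaranteed by Lemma~\ref{lem:conti_supp} is exactly what is needed to make the resulting drift a continuous $C_0$ path.
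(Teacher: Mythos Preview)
Your proposal is correct and follows essentially the same approach as the paper: subtract an affine minorant $\mathring{\mathbf{H}}$ (built from subgradients of the $f_i$ and the continuous supporting hyperplane of $F$ from Lemma~\ref{lem:conti_supp}) to obtain $\mathbf{H}_+\in\mathsf{H}^R_+$, then use Cameron--Martin to absorb $e^{-\mathring{\mathbf{H}}}$ into a drift shift $\eta_*$ independent of $x$, with the $x$-dependence factoring out as a scalar to give the log-affine $Z(x)$. The only difference is that you spell out the Girsanov/Cameron--Martin mechanics (the Wiener integral representation of $L(B)$ and the explicit $\eta_*=-\int_a^\cdot g$) where the paper simply cites the theory; both routes lead to the same conclusion.
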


From \eqref{equ:eHbdd_0} and \eqref{equ:eHbdd2_0},  the total integrals of $e^{-\mathbf{H}(z)}\gamma_{\mathsf{s}}$ and $e^{-\mathbf{H}(z)}\gamma_{\breve{\mathsf{s}}}$ are finite. Hence we can normalize $e^{-\mathbf{H}(z)}\gamma_{\mathsf{s}}$ and $e^{-\mathbf{H}(z)}\gamma_{\breve{\mathsf{s}}}$ to obtain probability measures.

\begin{definition}\label{def:mu}
Fix $R=\llbracket k,\ell \rrbracket\times [a,b]\subset \mathbb{Z}\times \mathbb{R}$. For any $\mathsf{s}\in\mathsf{S}^R$ and $\mathbf{H}\in\mathsf{H}^{R}$, we write $\mu_{\mathsf{s},\mathbf{H}}$ for the probability measure on $C(\llbracket k,\ell \rrbracket\times [a,b],\mathbb{R})$ that is proportional to $e^{-\mathbf{H}(z)}\gamma_{\mathsf{s}}$. Similarly, for any $\breve{\mathsf{s}}\in\breve{\mathsf{S}}^R$ and $\mathbf{H}\in\mathsf{H}^{R}$, we write $\mu_{\breve{\mathsf{s}},\mathbf{H}}$ for the probability measure that is proportional to $e^{-\mathbf{H}(z)}\gamma_{\breve{\mathsf{s}}}$. 
\end{definition}

{\color{black}The next lemma shows the log-concavity of the Radon-Nikodym derivative between finite-dimensional marginals of $\mu_{\mathsf{s},\mathbf{H}}$ and $\gamma_\mathsf{s}$.

\begin{lemma}\label{lem:main}
Fix $R=\llbracket k,\ell \rrbracket\times [a,b]\subset \mathbb{Z}\times \mathbb{R}$, $\mathsf{s}=(R,x,\eta(t))\in\mathsf{S}^{R}$ and $\mathbf{H}\in\mathsf{H}^{R}$. Let $\Pi\subset (a,b]$ be a finite set. Let $\hat{\gamma}$ and $\hat{\mu}$ be the probability measures on $\mathbb{R}^{\llbracket k,\ell \rrbracket\times\Pi }$ which are marginal laws of $\gamma_{\mathsf{s}}$ and $\mu_{\mathsf{s},\mathbf{H}}$ on $\{ z_j(t) \}_{ (j,t)\in\llbracket k,\ell \rrbracket \times\Pi}$  respectively. Then $\frac{d\hat{\mu}}{d\hat{\gamma}}$ is log-concave. When $\Pi\subset (a,b)$, the same holds for the bridge counterpart $\gamma_{\breve{\mathsf{s}}}$ and $\mu_{\breve{\mathsf{s}},\mathbf{H}}$.
\end{lemma}

The proof of Lemma~\ref{lem:main} is postponed to the end of this section.} In the following, we define the notion of convergence for probability measures and random continuous functions.

\begin{definition}\label{def:locallyweak}
Let $\Lambda_{i}\subset\mathbb{Z}$, $I_i\subset\mathbb{R}$ be (possibly infinite) intervals for $i\in\mathbb{N}\cup\{\infty\} $. Let $\mu_i$, $i\in\mathbb{N}\cup\{\infty\}$, be Borel probability measures on $C(\Lambda_i\times I_i,\mathbb{R})$. We say $\mu_i$ converges locally weakly to $\mu_\infty$ if the following holds. First, $\Lambda_{i}\times I_i\subset\Lambda_{\infty}\times I_{\infty}$ for all $i\in\mathbb{N}$. Second, for any compact subset $K\subset \Lambda_\infty\times I_\infty$, $K\subset \Lambda_i\times I_i$ for $i$ large enough. Third, the marginal law of $\mu_i$ on $C(K,\mathbb{R})$ converges weakly to the one of $\mu_\infty$ on $C(K,\mathbb{R})$.

Let $\mathcal{L}^{(i)}$, $i\in\mathbb{N}\cup\{\infty\}$ be a sequence of random continuous functions defined on $\Lambda_{i}\times I_i$. We say	$\mathcal{L}^{(i)}$ converges in distribution to $\mathcal{L}^{(\infty)}$ if the law of $\mathcal{L}^{(i)}$ locally weakly converges to the one of $\mathcal{L}^{(\infty)}$ 
\end{definition}

We are ready to define the class $\mathbf{LC}$.

\begin{definition}
Let $\mathring{\mathfrak{M}}$ be the collection of measures of the form $\mu_{\mathsf{s},\mathbf{H}}$ or $\mu_{\breve{\mathsf{s}},\mathbf{H}}$ given in Definition~\ref{def:mu}. Let $\mathfrak{M}$ be the collection of probability measures  that are the locally weak limit (in the sense of Definition~\ref{def:locallyweak}) of sequences in $\mathring{\mathfrak{M}}$. Let $\Lambda\subset\mathbb{Z}$, $I\subset\mathbb{R}$ be (possibly infinite) intervals. A random continuous function $\mathcal{L}=\{\mathcal{L}(j,t),\ j\in\Lambda, t\in I\}$ is said to be in the class $\mathbf{LC}$ if its law belongs to $\mathfrak{M}$.
\end{definition}

{\color{black}From the above definition, the class $\mathbf{LC}$ is closed under convergence in distribution. Consequently, we have the following corollary:

\begin{corollary}\label{cor:arginal_logconcave}
If $\mathcal{L}$ is in the class $\mathbf{LC}$, then any finite-dimensional marginal distribution of $\mathcal{L}$ is log-concave.

\begin{proof}
From Lemma~\ref{lem:main} and the Pr{\'e}kopa–Leindler Theorem \cite[Theorem 	13.9]{MR2814377}, finite-dimensional marginal distributions of $\mu_{\mathsf{s},\mathbf{H}}$ and $\mu_{\breve{\mathsf{s}},\mathbf{H}}$ are log-concave. Since log-concavity is preserved under convergence in distribution \cite[Theorem 2.10]{MR954608}, the assertion follows.
\end{proof}
\end{corollary}
}

For all members in $\mathbf{LC}$, we have the following estimates, uniform for every layer.

\begin{theorem}\label{thm:2}
Let $\Lambda \subset\mathbb{Z}$ and $I\subset\mathbb{R}$ be (possibly infinite) intervals. Let $\mathcal{L}$ be a random continuous function defined on $\Lambda\times I$ which is in the class $\mathbf{LC}$. For any $j\in\Lambda$ and $t\in I$, the mean of $\mathcal{L}_j(t)$, denoted by $\overline{\mathcal{L}}_j(t)$, exists. Let $\hat{\mathcal{L}}_j(t)= \mathcal{L}_j(t)-\overline{\mathcal{L}}_j(t)$. Then the following statements hold.
\begin{enumerate}[label=(\roman*)]
  \item For all $p\geq 1$, $  j\in \Lambda$ and $t,s\in I$, we have
\begin{align*}
\mathbb{E}|\hat{\mathcal{L}}_j(t)-\hat{\mathcal{L}}_j(s)|^p\leq N_p|t-s|^{p/2},
\end{align*}
where $N_p=\mathbb{E}|\mathcal{N}|^p$ for a standard normal $\mathcal{N}$.
  \item There exist universal constants $C_1,C_2>0$ such that for all $[a,b]\subset I$, $j\in \Lambda$ and $K\geq 0$, we have
\begin{align*} 
\mathbb{P}\left( \sup_{t,s\in [a,b], t\neq s}  \frac{|\hat{\mathcal{L}}_j(t)-\hat{\mathcal{L}}_j(s) |}{\sqrt{|t-s| \log (2(b-a)/|t-s|)}} > K \right)\leq C_1e^{-C_2K^2}.
\end{align*}
  \item For any $p>0$ and $\alpha\in (0,\frac{1}{2})$, there exists a constant $C(\alpha,p)>0$ such that for all $[a,b]\subset I$ and $j\in \Lambda$, we have  
  \begin{align*}
  \mathbb{E} \|\hat{\mathcal{L}}_j\|_{\alpha,[a,b]}^p  \leq  C(\alpha,p)(b-a)^{p/2-\alpha p }. 
\end{align*}   
\end{enumerate}
\end{theorem}

Apart from $\beta$-Dyson Brownian motions, there are several other important $\beta$ ensembles originating from random matrix theory, such as the $\beta$-Laguerre processes, the $\beta$-Jacobi processes, and the $\beta$-Dyson Brownian motions with general potentials. We refer readers to \cite{huang2024convergenceframeworkairybetaline} and references therein. While we believe our methods could be applied to some of those, we do not explore these directions in the current paper.

In the next proposition, we provide a general comparison of the expectations of a convex functional $G$ with respect to $\gamma_{\mathsf{s}}$ and $\mu_{\mathsf{s},\mathbf{H}}$. A functional $G$ on $C(\llbracket k,\ell \rrbracket\times [a,b],\mathbb{R})$ is said to be convex and dependent on finitely many times if there exist a finite set $\Pi\subset [a,b]$ and a convex function $g$ on $\mathbb{R}^{\llbracket k,\ell \rrbracket\times\Pi}$ such that $G=g\circ \textup{Res}_{\llbracket k,\ell \rrbracket\times\Pi}$. Here $\textup{Res}_{\llbracket k,\ell \rrbracket\times\Pi}$ is the restriction map from $C(\llbracket k,\ell \rrbracket\times [a,b],\mathbb{R})$ to $\mathbb{R}^{\llbracket k,\ell \rrbracket\times \Pi}$ given by $z\mapsto\{z_j(t)\}_{(j,t)\in \llbracket k,\ell \rrbracket\times \Pi}$.
\begin{proposition}\label{pro:main}
Fix $R=\llbracket k,\ell \rrbracket\times [a,b]\subset \mathbb{Z}\times \mathbb{R}$, $\mathsf{s}=(R,x,\eta(t))\in\mathsf{S}^{R}$ and $\mathbf{H}\in\mathsf{H}^{R}$. Let $\gamma= \gamma_{\mathsf{s}}$ be the Wiener measure on $C(\llbracket k,\ell \rrbracket\times [a,b],\mathbb{R})$ associated with $\mathsf{s}$ and let $\mu=\mu_{\mathsf{s},\mathbf{H}}$ be the probability measure that is proportional to $e^{-\mathbf{H}(z)}\gamma_{\mathsf{s}}$. We write $\bar{z}_{\gamma}(t)$ and $\bar{z}_{\mu}(t)$ for the mean values of $z(t)$ under $\gamma$ and $\mu$ respectively. Then for any convex functional $G$ on $C(\llbracket k,\ell \rrbracket\times [a,b],\mathbb{R})$ that depends on finitely many times, we have  
\begin{equation}\label{equ:main}
\mathbb{E}_{\mu}[G(z-\bar{z}_\mu)]\leq\mathbb{E}_{\gamma}[G(z-\bar{z}_\gamma)].  
\end{equation}
Fix $\breve{\mathsf{s}}\in\breve{\mathsf{S}}^{R}$. Let $\breve{\gamma}= \gamma_{\breve{\mathsf{s}}}$ be the Wiener measure on $C(\llbracket k,\ell \rrbracket\times [a,b],\mathbb{R})$ associated with $\breve{s}$ and let $\breve{\mu}=\mu_{\breve{\mathsf{s}},\mathbf{H}}$ be the probability measure that is proportional to $e^{-\mathbf{H}(z)}\gamma_{\breve{\mathsf{s}}}$. We write $\bar{z}_{\breve{\gamma}}(t)$ and $\bar{z}_{\breve{\mu}}(t)$ for the mean values of $z(t)$ under $\breve{\gamma}$ and $\breve{\mu}$ respectively. Then for any convex functional $G$ on $C(\llbracket k,\ell \rrbracket\times [a,b],\mathbb{R})$ that depends on finitely many times, we have  
\begin{equation}\label{equ:mainb}
\mathbb{E}_{\breve{\mu}}[G(z-\bar{z}_{\breve{\mu}})]\leq\mathbb{E}_{\breve{\gamma}}[G(z-\bar{z}_{\breve{\gamma}})].  
\end{equation}

{\color{black}
\begin{proof}
We present the proof for \eqref{equ:main}. The argument for the bridge counterpart \eqref{equ:mainb} is similar. Let $\Pi\subset [a,b]$ be a finite set such that $G=g\circ\text{Res}_{\llbracket k,\ell \rrbracket\times\Pi}$ for some convex function $g$ on $\mathbb{R}^{\llbracket k,\ell \rrbracket\times\Pi}$. Without loss of generality, we may assume $\Pi\subset (a,b]$. Let $\hat{\gamma}$ and $\hat{\mu}$ be the probability measures on $\mathbb{R}^{\llbracket k,\ell \rrbracket\times\Pi }$ which are marginal laws of $\gamma$ and $\mu$ on $\{ z_j(t) \}_{ (j,t)\in\llbracket k,\ell \rrbracket \times\Pi}$ respectively. Lemma~\ref{lem:main} implies $\frac{d\hat{\mu}}{d\hat{\gamma}}$ is log-concave. We write $w$ for a vector in $\mathbb{R}^{\llbracket k,\ell \rrbracket \times\Pi}$ and let $\bar{w}_\mu$ and $\bar{w}_\gamma$ be the mean values of $w$ under $\hat{\mu}$ and $\hat{\gamma}$ respectively. Applying \cite[Theorem 1.1]{MR2095937}, we get 
\begin{align*}
\mathbb{E}_{\hat{\mu}}[g(w-\bar{w}_\mu)]\leq\mathbb{E}_{\hat{\gamma}}[g(w-\bar{w}_\gamma)]. 
\end{align*}
Then \eqref{equ:main} holds true because $\mathbb{E}_{\hat{\mu}}[g(w-\bar{w}_\mu)]=\mathbb{E}_{\mu}[G(z-\bar{z}_\mu)]$ and $\mathbb{E}_{\hat{\gamma}}[g(w-\bar{w}_\gamma)]=\mathbb{E}_{\gamma}[G(z-\bar{z}_\gamma)]$. 
\end{proof}
}
\end{proposition}

With Proposition~\ref{pro:main}, we prove Theorem~\ref{thm:2}, the main result of the paper.

\begin{proof}[Proof of Theorem~\ref{thm:2} (i)]
{\color{black}From Corollary~\ref{cor:arginal_logconcave}, the marginal law of $\mathcal{L}_j(t)$ is log-concave. This ensures the mean of $\mathcal{L}_j(t)$ exists.} Applying Proposition~\ref{pro:main} with $G(z)=|z_j(t)-z_j(s)|^p$, we deduce that Theorem~\ref{thm:2} (i) holds true provided the law of $\mathcal{L}$ is of the form $\mu_{\mathsf{s},\mathbf{H}}$ or $\mu_{\breve{\mathsf{s}},\mathbf{H}}$. By taking a limit, we conclude that Theorem~\ref{thm:2} (i) holds true for all $\mathcal{L}\in \mathbf{LC}$.
\end{proof}
\begin{proof}[Proof of Theorem~\ref{thm:2} (ii)]
Theorem~\ref{thm:2} (i) shows that the $p$-th moment of $|\hat{\mathcal{L}}_j(t)-\hat{\mathcal{L}}_j(s)|/|t-s|^{1/2}$ is less then the one of a standard normal random variable. Therefore, for all $a\in [0,1/2)$, we have 
\begin{align*}
\mathbb{E}\left[ e^{a |\hat{\mathcal{L}}_j(t)-\hat{\mathcal{L}}_j(s)|^2/|t-s|} \right] \leq (1-2a)^{-1/2}.  
\end{align*}
For any $K\geq 1$, we take $a=\frac{1}{2}-\frac{1}{2K^2}$ and apply the Markov inequality to get 
\begin{align}\label{equ:tail}
\mathbb{P}\left( |\hat{\mathcal{L}}_j(t)-\hat{\mathcal{L}}_j(s)|>K(t-s)^{1/2} \right)
\leq e^{-K^2/2+\log K+1/2}.
\end{align}
The assertion then follows by combining \eqref{equ:tail} and \cite[Lemma 3.3]{MR4260467}.
\end{proof}
\begin{proof}[Proof of Theorem~\ref{thm:2} (iii)]
Fix $\alpha\in (0,\frac{1}{2})$. By Jensen's inequality, it suffices to prove the assertion for $p>\left( \frac{1}{2}-\alpha \right)^{-1} $, which is assumed henceforth. From the Garsia-Rodemich-Rumsey inequality \cite[Lemma 1.1]{MR267632}
, there exists a constant $C'(\alpha,p)$ depending only on $\alpha$ and $p$ such that for all $a<b$ and $j\in\mathbb{N}$,
\begin{align*}
\|\hat{\mathcal{L}}_j\|^p_{ \alpha,[a,b]}\leq C'(\alpha,p) \left( \int_a^b\int_a^b \frac{|\hat{\mathcal{L}}_j(t)-\hat{\mathcal{L}}_j(s)|^p}{|t-s|^{\alpha p+2}}\, dtds \right).
\end{align*}
Taking the expectation on both sides and applying Theorem~\ref{thm:2} (i), we conclude
\begin{align*}
\mathbb{E}\|\hat{\mathcal{L}}_j\|^p_{ \alpha,[a,b]}\leq & C'(\alpha,p)  N_p \left( \int_a^b\int_a^b |t-s|^{p/2-\alpha p-2}\, dtds \right) =  C(\alpha,p) (b-a)^{p/2-\alpha p}
\end{align*}
with a suitably chosen constant $C(\alpha,p)$.
\end{proof}

To prove Lemma~\ref{lem:main}, we need the next two lemmas.
\begin{lemma}\label{lem:conditionlogconcave}
Fix $n,m\in\mathbb{N}$. Let $\hat{\gamma}$ be a Gaussian measure on $\mathbb{R}^{n+m}$. We write $x\in\mathbb{R}^n$, $y\in\mathbb{R}^m$ and $\hat{\gamma}=\rho(x,y)\, dxdy $.  Then $
 \rho(y\, |\, x)= \rho(x,y)\big/ \int_{\mathbb{R}^m} \rho(x,\tilde{y} )\, d\tilde{y}   $ is log-concave in $(x,y)$.
 \begin{proof} 
 We have
 $\rho(x,y)= c e^{-Q(x,y)-\left\langle a,x \right\rangle-\left\langle b,y \right\rangle} $ for some positive quadratic form $Q$, $a\in\mathbb{R}^n$, $b\in\mathbb{R}^m$ and $c>0$. Through completing the square, we have
 $Q(x,y)+\left\langle a,x \right\rangle+\left\langle b,y \right\rangle=Q_1(x+a')+Q_2(y+y'(x)+b')+c'$ for some positive quadratic forms $Q_1,Q_2$, $a'\in\mathbb{R}^n$, $b'\in\mathbb{R}^m, c'\in\mathbb{R}$ and some linear map $y':\mathbb{R}^n\to\mathbb{R}^m$. Then $\rho(y\,|\,x)=c''e^{-Q_2(y+y'(x)+b')}$, which is log-concave.
\end{proof}
\end{lemma}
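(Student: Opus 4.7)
The plan is to reduce to the explicit Gaussian form and complete the square in $y$. Since $\hat{\gamma}$ admits a density on $\mathbb{R}^{n+m}$, its covariance is invertible, so we may write $\rho(x,y) = c \exp(-Q(x,y))$ for some inhomogeneous quadratic polynomial $Q$ whose quadratic part is strictly positive definite. The goal is equivalent to showing that $Q(x,y) - Q_1(x)$ is convex in $(x,y)$ jointly, where the marginal $\rho_X(x) := \int \rho(x,\tilde{y})\,d\tilde{y} = c' \exp(-Q_1(x))$ is again Gaussian and hence of this form.

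The key step is to complete the square in the $y$-variable. For each fixed $x$, the map $y \mapsto Q(x,y)$ has a unique minimizer $y^*(x)$, which depends affinely on $x$; write $y^*(x) = L(x) + y_0$ for some linear $L : \mathbb{R}^n \to \mathbb{R}^m$ and some $y_0 \in \mathbb{R}^m$. Taylor expansion around $y^*(x)$ gives
\[
Q(x,y) = Q(x, y^*(x)) + Q_2\bigl(y - L(x) - y_0\bigr),
\]
where $Q_2$ is the $y$-Hessian of $Q$, positive definite and independent of $x$ because $Q$ is quadratic. Integrating out $y$ yields $\rho_X(x) = c \exp(-Q(x, y^*(x))) \int \exp(-Q_2(z))\,dz$, so $Q_1(x) = Q(x, y^*(x))$ up to an additive constant. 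Substituting back gives
\[
Q(x,y) - Q_1(x) = Q_2\bigl(y - L(x) - y_0\bigr) + \mathrm{const},
\]
which is a positive-definite quadratic form precomposed with an affine map on $\mathbb{R}^{n+m}$, hence convex in $(x,y)$. This yields joint log-concavity of $\rho(y \mid x)$.

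The conceptually delicate point, and the thing to get right, is that joint log-concavity in $(x,y)$ is strictly stronger than the obvious log-concavity in $y$ for each fixed $x$. What makes it work is the dual fact that the $y$-Hessian $Q_2$ of a Gaussian Hamiltonian does not depend on $x$, and that the residual $Q(x, y^*(x))$ obtained by optimizing out $y$ matches the marginal Hamiltonian $Q_1(x)$ up to a constant. This cancellation, which is the Schur complement identity for block Gaussians in disguise, reduces the $x$-dependence of $\rho(y \mid x)$ to a mere translation of the $y$-argument, and convexity is then automatic.
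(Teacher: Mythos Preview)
Your proof is correct and follows essentially the same route as the paper: both complete the square in $y$ to decompose the Gaussian Hamiltonian as a marginal piece in $x$ plus a positive definite quadratic form $Q_2$ applied to $y$ minus an affine function of $x$, from which joint log-concavity of the conditional density is immediate. Your version is slightly more explicit about why the marginal Hamiltonian $Q_1(x)$ coincides with $Q(x,y^*(x))$ up to a constant, and you add helpful commentary on the Schur-complement interpretation, but the mathematical content is the same.
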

\begin{lemma}\label{lem:logconvexmarginal_b}
Fix $R=\llbracket k,\ell \rrbracket\times [a,b]\subset \mathbb{Z}\times \mathbb{R}$, $\breve{\eta}(t)\in C_{0,0}(\llbracket k,\ell \rrbracket\times [a,b],\mathbb{R})$, and $\mathbf{H}\in\mathsf{H}^{R}$. For any $x,y\in\mathbb{R}^{\llbracket k,\ell \rrbracket}$, we let $\breve{\mathsf{s}}(x,y)=(R,x,y,\breve{\eta}(t))\in \breve{\mathsf{S}}^{R}$ and let $\mathbb{E}_{x,y}$ be the expectation with respect to $\gamma_{\breve{\mathsf{s}}(x,y)}$. Define  
\begin{align*}
Z_{\mathbf{H}}(x,y)=\mathbb{E}_{x,y}\big[e^{-\mathbf{H}(z)}\big].
\end{align*}
Then $Z_{\mathbf{H}}(x,y)$ is a log-concave function in $(x,y)\in \mathbb{R}^{\llbracket k,\ell \rrbracket}\times\mathbb{R}^{\llbracket k,\ell \rrbracket}$.
\begin{proof}
Let $\mathbf{H}$ be given by the form \eqref{equ:H}. From Lemma~\ref{lem:mu0}, we may assume $f_i$ and $F$ are non-negative. For $n\in\mathbb{N}$, set $\tau_{n,i}=a+\frac{(b-a)i}{n}$, 
\begin{align*}
\mathbf{H}_n(z)=\sum_{i=1}^m f_i(z(t_i))+ \frac{b-a}{n} \sum_{i=1}^n  F\left(\tau_{n,i} ,z(\tau_{n,i})\right),
\end{align*}
and
\begin{align*}
Z_{\mathbf{H}_n}(x,y)=\mathbb{E}_{x,y}\big[e^{-\mathbf{H}_n(z)}\big].
\end{align*}
For any $z\in C(\llbracket k,\ell \rrbracket \times [a,b],\mathbb{R})$, we have $\lim_{n\to\infty}\mathbf{H}_n(z)\to \mathbf{H}(z)$. The non-negativity of $f_i$ and $F$ ensures $e^{-\mathbf{H}_n(z)}\in (0,1)$. Applying the bounded convergence theorem, we have $\lim_{n\to\infty} Z_{\mathbf{H}_n}(x,y)=Z_{\mathbf{H}}(x,y)$. Hence it suffices to show $Z_n(x,y)$ is log-concave for all $n\in\mathbb{N}$.

Fix $n\in\mathbb{N}$ and set $\Pi= \{t_i\}_{i=1}^m \cup \{ a+\frac{(b-a)i}{n} \}_{i=1}^n  \setminus \{a,b\} $. The marginal law of $\gamma_{\breve{s}(x,y)}$ on $\{ z_j(t) \}_{ (j,t)\in\llbracket k,\ell \rrbracket \times\Pi}$ is a Gaussian measure on $\mathbb{R}^{\llbracket k,\ell \rrbracket \times\Pi}$ and we denote such a measure by $\hat{\gamma}(x,y)$. We use $w$ to denote a vector in $\mathbb{R}^{\llbracket k,\ell \rrbracket \times\Pi }$. Let $\rho(w|x,y)$ be the density of $\hat{\gamma}(x,y)$ with respect to the Lebesgue measure $dw$. From Lemma~\ref{lem:conditionlogconcave}, $\rho(w|x,y)$ is log-concave in $(x,y,w)$. Then $Z_{\mathbf{H}_n}(x,y)$ is the integral of a log-concave function with respect to $\hat{\gamma}(x,y)=\rho(w|x,y)dw$. Hence $Z_{\mathbf{H}_n}(x,y)$ is log-concave because of the Pr{\'e}kopa–Leindler Theorem \cite[Theorem 	13.9]{MR2814377}. This finishes the proof.

\end{proof} 
\end{lemma}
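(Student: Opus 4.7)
The plan is to reduce the lemma to a finite-dimensional statement about log-concave densities and then invoke the Prékopa theorem, matching the strategy that has already been set up in the preceding lemmas. The main obstacles are (a) making sense of the infinite-dimensional integral $\int_a^b F(t,z(t))\,dt$ on the level of densities and (b) justifying the limit under which a time-discretized Hamiltonian is replaced by the true one; the Prékopa step itself is standard once everything is in place.

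First, I would reduce to a ``nicely bounded'' case. By Lemma~\ref{lem:mu0}, up to multiplying $Z_{\mathbf H}(x,y)$ by $\breve Z(x,y)^{-1}$, an affine-exponential factor that is manifestly log-affine in $(x,y)$ and in particular log-concave, I may assume $\mathbf H\in\mathsf H^R_+$, i.e.\ both each $f_i$ and the integrand $F(t,\cdot)$ are non-negative convex functions. The point of this reduction is that then $e^{-\mathbf H_n(z)}\in(0,1]$ uniformly, enabling bounded convergence in the final step.

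Second, I would discretize the time integral by defining, for each $n\in\mathbb N$ and $\tau_{n,i}=a+(b-a)i/n$,
\begin{align*}
\mathbf H_n(z)=\sum_{i=1}^m f_i(z(t_i))+\frac{b-a}{n}\sum_{i=1}^n F\bigl(\tau_{n,i},z(\tau_{n,i})\bigr),
\qquad
Z_{\mathbf H_n}(x,y)=\mathbb E_{x,y}\bigl[e^{-\mathbf H_n(z)}\bigr].
\end{align*}
By continuity of $F$ and the Riemann-integral convergence, $\mathbf H_n(z)\to\mathbf H(z)$ pointwise in $z$ for each continuous path $z$. The bound $e^{-\mathbf H_n(z)}\in(0,1]$ and the dominated convergence theorem then give $Z_{\mathbf H_n}(x,y)\to Z_{\mathbf H}(x,y)$ pointwise in $(x,y)$. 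Log-concavity is preserved under pointwise limits, so it suffices to establish log-concavity of $Z_{\mathbf H_n}$ for every fixed $n$.

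Third, I would realize $Z_{\mathbf H_n}(x,y)$ as an integral of a log-concave function against a jointly log-concave Gaussian density. Fix $n$ and let
\[
\Pi=\bigl(\{t_i\}_{i=1}^m\cup\{\tau_{n,i}\}_{i=1}^n\bigr)\setminus\{a,b\},
\qquad w=\{z_j(t)\}_{(j,t)\in\llbracket k,\ell\rrbracket\times\Pi}\in\mathbb R^{\llbracket k,\ell\rrbracket\times\Pi}.
\]
Then $\mathbf H_n(z)$ is a function of $w$ alone, and moreover, because each $f_i$ and $F(\tau_{n,i},\cdot)$ is convex in its spatial argument, $\mathbf H_n$ is convex in $w$, so $e^{-\mathbf H_n(w)}$ is log-concave in $w$ (independently of $(x,y)$). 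The marginal of $\gamma_{\breve{\mathsf s}(x,y)}$ on these finitely many time points is a Gaussian whose mean depends affinely on $(x,y)$ (thanks to the linear interpolation structure of the Brownian bridge and the deterministic drift $\eta$) and whose covariance is independent of $(x,y)$. Hence its density $\rho(w\,|\,x,y)$ with respect to $dw$ is exactly a conditional Gaussian density in the sense of Lemma~\ref{lem:conditionlogconcave}, and is therefore jointly log-concave in $(x,y,w)$.

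Finally, writing
\begin{align*}
Z_{\mathbf H_n}(x,y)=\int_{\mathbb R^{\llbracket k,\ell\rrbracket\times\Pi}} e^{-\mathbf H_n(w)}\,\rho(w\,|\,x,y)\,dw,
\end{align*}
the integrand is a product of two jointly log-concave functions of $(x,y,w)$, hence log-concave in $(x,y,w)$. Applying the Prékopa theorem to integrate out $w$ yields log-concavity of $Z_{\mathbf H_n}(x,y)$ in $(x,y)$. Passing $n\to\infty$ as above, $Z_{\mathbf H}$ is log-concave, which completes the proof.
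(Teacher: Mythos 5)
Your proposal is correct and follows essentially the same route as the paper: reduce to non-negative $f_i$ and $F$ via Lemma~\ref{lem:mu0}, discretize the time integral, pass to the limit by bounded convergence, and apply Lemma~\ref{lem:conditionlogconcave} together with Pr\'ekopa's theorem to the finite-dimensional Gaussian marginal. The only nuance is that $\mathbf{H}_n$ is not quite a function of $w$ alone, since it can involve $z(a)=x+\eta(a)$ and $z(b)=y+\eta(b)$ (e.g.\ through $\tau_{n,n}=b$ or a $t_i\in\{a,b\}$), but these contributions are convex functions of $(x,y)$ alone and hence jointly log-concave factors that do not affect the Pr\'ekopa step.
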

The argument for the following lemma is similar to the one for Lemma~\ref{lem:logconvexmarginal_b} and we omit the proof.

\begin{lemma}\label{lem:logconvexmarginal}
Fix $R=\llbracket k,\ell \rrbracket\times [a,b]\subset \mathbb{Z}\times \mathbb{R}$, $\eta(t)\in C_0(\llbracket k,\ell \rrbracket\times [a,b],\mathbb{R})$, and $\mathbf{H}\in\mathsf{H}^{R}$. For any $x\in\mathbb{R}^{\llbracket k,\ell \rrbracket}$, we let $  \mathsf{s} (x )=(R,x,\eta(t))\in  \mathsf{S}^{R}$ and let $\mathbb{E}_{x}$ be the expectation with respect to $\gamma_{ \mathsf{s}(x)}$. Define  
\begin{align*}
Z_{\mathbf{H}}(x )=\mathbb{E}_{x }\big[e^{-\mathbf{H}(z)}\big].
\end{align*}
Then $Z_{\mathbf{H}}(x )$ is a log-concave function in $x\in \mathbb{R}^{\llbracket k,\ell \rrbracket} $.
\end{lemma}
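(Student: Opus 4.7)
The plan is to adapt the proof of Lemma~\ref{lem:logconvexmarginal_b} to the Brownian motion setting, where now only the left endpoint $x$ is prescribed instead of both endpoints $(x,y)$. First I would invoke Lemma~\ref{lem:mu0} to reduce to the case $\mathbf{H}\in \mathsf{H}^{R}_+$ with non-negative $f_i$ and $F$: the reduction only changes the shift function $\eta$ and multiplies $Z_{\mathbf{H}}(x)$ by an exponential-of-affine factor, which preserves log-concavity.

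Next, I would discretize the integral in $\mathbf{H}$ by Riemann sums. Setting $\tau_{n,i}=a+\tfrac{(b-a)i}{n}$ and
\begin{align*}
\mathbf{H}_n(z)=\sum_{i=1}^m f_i(z(t_i))+\frac{b-a}{n}\sum_{i=1}^n F(\tau_{n,i},z(\tau_{n,i})),
\end{align*}
continuity of $F$ together with $e^{-\mathbf{H}_n(z)}\in (0,1]$ (by non-negativity) and the bounded convergence theorem give $Z_{\mathbf{H}_n}(x)\to Z_{\mathbf{H}}(x)$ pointwise. Since pointwise limits of log-concave functions are log-concave, it suffices to establish log-concavity of each $Z_{\mathbf{H}_n}(x)$.

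For fixed $n$, let $\Pi=(\{t_i\}_{i=1}^m\cup\{\tau_{n,i}\}_{i=1}^n)\setminus\{a\}$ and let $\hat{\gamma}(x)$ be the marginal law of $\gamma_{\mathsf{s}(x)}$ on the coordinates $\{z_j(t)\}_{(j,t)\in\llbracket k,\ell\rrbracket\times\Pi}$. Writing $w\in\mathbb{R}^{\llbracket k,\ell\rrbracket\times\Pi}$ and $\hat{\gamma}(x)=\rho(w\,|\,x)\,dw$, the joint description of $(x, w)$ corresponds to starting independent Brownian motions at $x$ and reading them at times in $\Pi$, so the density $\rho(w\,|\,x)$ can be realized as a conditional density of a Gaussian on $\mathbb{R}^{\llbracket k,\ell\rrbracket}\times\mathbb{R}^{\llbracket k,\ell\rrbracket\times\Pi}$ (placing the full Gaussian law on $(x,w)$ corresponding to some fixed initial distribution for $x$, then conditioning). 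By Lemma~\ref{lem:conditionlogconcave}, $\rho(w\,|\,x)$ is log-concave in $(x,w)$. Since $\mathbf{H}_n(z)$ depends on $z$ only through the values $w$ and is convex in $w$, the integrand $e^{-\mathbf{H}_n(w)}\rho(w\,|\,x)$ is log-concave in $(x,w)$.

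Finally, I would apply the Prékopa theorem \cite[Theorem 13.9]{MR2814377} to the log-concave function $e^{-\mathbf{H}_n(w)}\rho(w\,|\,x)$ on $\mathbb{R}^{\llbracket k,\ell\rrbracket}\times\mathbb{R}^{\llbracket k,\ell\rrbracket\times\Pi}$. Integrating out $w$ produces exactly $Z_{\mathbf{H}_n}(x)$, which is therefore log-concave in $x$, completing the proof. The only subtle point is the first step of the last paragraph, namely realizing $\rho(w\,|\,x)$ as a genuine conditional Gaussian density so that Lemma~\ref{lem:conditionlogconcave} applies; this should be handled by noting that the Brownian increments $w_j(t_i)-x_j$ are jointly Gaussian and independent of $x$, so conditioning a Gaussian joint distribution on $x$ gives the same density as $\hat{\gamma}(x)$.
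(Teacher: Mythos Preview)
Your proposal is correct and follows essentially the same approach as the paper, which explicitly states that the argument is analogous to that of Lemma~\ref{lem:logconvexmarginal_b} and omits the details. Your adaptation---reducing to $\mathbf{H}\in\mathsf{H}^R_+$ via Lemma~\ref{lem:mu0}, discretizing, applying Lemma~\ref{lem:conditionlogconcave} to obtain joint log-concavity of $\rho(w\,|\,x)$, and then invoking Pr\'ekopa---mirrors the proof of Lemma~\ref{lem:logconvexmarginal_b} exactly, with the only change being that $\Pi$ omits just $\{a\}$ rather than $\{a,b\}$; your remark on realizing $\rho(w\,|\,x)$ as a genuine Gaussian conditional density is a helpful clarification of a step the paper leaves implicit.
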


\begin{proof}[Proof of Lemma~\ref{lem:main}] 
{\color{black}
We start by recalling the setting. Fix $R=\llbracket k,\ell \rrbracket\times [a,b]\subset \mathbb{Z}\times \mathbb{R}$, $\mathsf{s}=(R,x,\eta(t))\in\mathsf{S}^{R}$, $\mathbf{H}\in\mathsf{H}^{R}$ and a finite set $\Pi\subset (a,b]$. We write 
 $\hat{\gamma}$ and $\hat{\mu}$ for the marginal laws of $\gamma_{\mathsf{s}}$ and $\mu_{\mathsf{s},\mathbf{H}}$ on $\{ z_j(t) \}_{ (j,t)\in\llbracket k,\ell \rrbracket \times\Pi}$  respectively. The goal is to prove the log-concavity of  $\frac{d\hat{\mu}}{d\hat{\gamma}}$. 
 
Without loss of generality, we may assume $b\in \Pi$.
} We denote $\Pi=\{\tau_1<\dots<\tau_q=b\}$ with $a<\tau_1$ and adopt the convention that $\tau_0=a$. We identify $z(t)\in C(\llbracket k,\ell \rrbracket\times [a,b],\mathbb{R})$ with $$((w_{(0)},w_{(1)},\dots,w_{(q)}),(z_{(1)},z_{(2)},\dots,z_{(q)}))\in\prod_{i=0}^q\mathbb{R}^{\llbracket k,\ell \rrbracket}\times \prod_{i=1}^q C(\llbracket k,\ell \rrbracket\times[\tau_{i-1},\tau_i],\mathbb{R})$$ through
\begin{align}\label{equ:identification}
w_{(i)}=z(\tau_i)\in\mathbb{R}^{\llbracket k,\ell \rrbracket},\ z_{(i)}=z|_{[\tau_{i-1},\tau_i]}\in C(\llbracket k,\ell \rrbracket\times [\tau_{i-1},\tau_i],\mathbb{R}).
\end{align}
Recall that $\mathsf{s}=(R,x,\eta(t))$. In particular, $z(a)=x$ with probability one under $\gamma=\gamma_{\mathsf{s}}$ and $\mu=\mu_{\mathsf{s},\mathbf{H}}$. From now on we adopt the convention $w_{(0)}\equiv x$. Define $\eta_{(i)}(t)\in C_{0,0}(\llbracket k,\ell \rrbracket\times [\tau_{i-1},\tau_i],\mathbb{R})$ by $$\eta_{(i)}(t)=\eta(t)-\frac{t-\tau_{i-1}}{\tau_i-\tau_{i-1}}\eta(\tau_i)-\frac{\tau_{i}-t}{\tau_i-\tau_{i-1}}\eta(\tau_{i-1}), $$
and set $\breve{\mathsf{s}}(w_{(i-1)},w_{(i)})=(\llbracket k,\ell \rrbracket\times [\tau_{i-1},\tau_i],w_{(i-1)},w_{(i)},\eta_{(i)})$.  Under the identification \eqref{equ:identification}, $ \gamma $ can be rewritten as
\begin{align*}
\gamma= \hat{\gamma} \times\prod_{i=1}^q   \gamma_{  \breve{\mathsf{s}}(w_{(i-1)},w_{(i)})} 
\end{align*}
Moreover, for suitably chosen convex Hamiltonians $\mathbf{H}_{(i)}:C(\llbracket k,\ell \rrbracket\times [\tau_{i-1},\tau_i],\mathbb{R})\to \mathbb{R}$, we have
$$ e^{-\mathbf{H}(z)}=\prod_{i=1}^q e^{-\mathbf{H}_{(i)}(z_{(i)})} $$
under the identification~\eqref{equ:identification}. Therefore, for $w=(w_{(1)},\dots,w_{(q)})\in\mathbb{R}^{\Lambda\times\Pi}$, 
\begin{align*}
\frac{d\hat{\mu}}{d\hat{\gamma}}(w)\propto \prod_{i=1}^q 
\mathbb{E}_{w_{(i-1)},w_{(i)}}[e^{-\mathbf{H}_{(i)}(z_{(i)})}],
\end{align*}
where $\mathbb{E}_{w_{(i-1)},w_{(i)}}$ is the expectation with respect to $\breve{\mathsf{s}}(w_{(i-1)},w_{(i)})$. Apply Lemma~\ref{lem:logconvexmarginal_b}, we conclude that $\frac{d\hat{\mu}}{d\hat{\gamma}}$ is log-concave. This finishes the proof.

\end{proof}

\section{$\beta$-Dyson Brownian Motion and other examples}\label{sec:3}

In Subsection~\ref{sec:3.1}, we show that $\beta$-Dyson Brownian motions belong to $\mathbf{LC}$ for $\beta\geq 2$. This allows us to prove modulus of continuity estimates for $\beta$-Dyson Brownian motions (Theorem~\ref{thm:1}) and for Air$\textup{y}_\beta$ line ensembles (Corollary~\ref{prop:3}) for $\beta \geq 2$. In Subsection~\ref{sec:3.2}, we provide two more examples in $\mathbf{LC}$, the O'Connell-Yor line ensemble and the KPZ line ensemble, and prove Corollary~\ref{prop:5}, a modulus of continuity estimate for the KPZ line ensemble. 

\subsection{$\beta$-Dyson Brownian Motion}\label{sec:3.1}
In this subsection, we prove the following proposition.

\begin{proposition}\label{prop:DBM}
For $\beta\geq 2$, the $\beta$-Dyson Brownian motion starting at $x_*\in\overline{\mathbb{W}}^N$ is in the class $\mathbf{LC}$.
\end{proposition}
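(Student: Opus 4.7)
The plan is to realize the law of the $\beta$-Dyson Brownian motion on each finite time horizon as a limit of measures of the form $\mu_{\mathsf{s},\mathbf{H}_n}\in\mathring{\mathfrak{M}}$, and then to pass to the infinite horizon and to boundary starting points via further locally weak limits. Fix $T>0$ and first assume $x_*\in\mathbb{W}^N$. By the Girsanov construction recalled in Subsection~\ref{subsec:1}, the law $\mu_\beta^{[0,T]}$ of the $\beta$-DBM started at $x_*$ on $[0,T]$ is absolutely continuous with respect to the Wiener measure $\gamma_{\mathsf{s}}$ with $\mathsf{s}=(\llbracket 1,N\rrbracket\times[0,T],x_*,0)$, with density $M_\beta(T)=h_\beta(x_*)^{-1}h_\beta(X(T))\,e^{-\int_0^T V_\beta(X(s))\,ds}\mathbbm{1}_{\{X(s)\in\mathbb{W}^N\,\forall s\in [0,T]\}}$. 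Formally this corresponds to the Hamiltonian $\mathbf{H}_\infty(X)=-\log h_\beta(X(T))+\int_0^T V_\beta(X(s))\,ds$ on paths in $\mathbb{W}^N$ and $+\infty$ otherwise; its constituents are convex on $\mathbb{W}^N$, but the singularity at $\partial\mathbb{W}^N$ prevents $\mathbf{H}_\infty$ from lying in $\mathsf{H}^R$ directly.

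The key step is to approximate by continuous convex surrogates on all of $\mathbb{R}^N$. Let $\psi_n\colon\mathbb{R}\to\mathbb{R}$ equal $-\log u$ on $[1/n,\infty)$ and be continued by the tangent affine function at $u=1/n$ on $(-\infty,1/n)$; define $\chi_n$ analogously for $u^{-2}$. A direct computation shows each $\psi_n,\chi_n$ is continuous and convex, and that $\psi_n(u),\chi_n(u)$ are monotonically non-decreasing in $n$ for every $u\in\mathbb{R}$, converging respectively to $-\log u$ and $u^{-2}$ on $(0,\infty)$ and to $+\infty$ on $(-\infty,0]$. Set
\begin{align*}
f_n(x)=\tfrac{\beta}{2}\sum_{1\leq i<j\leq N}\psi_n(x_i-x_j),\qquad F_n(x)=\tfrac{\beta(\beta-2)}{4}\sum_{1\leq i<j\leq N}\chi_n(x_i-x_j),
\end{align*}
both continuous and convex on $\mathbb{R}^N$ (the hypothesis $\beta\geq 2$ ensures the coefficient in $F_n$ is non-negative). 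Then $\mathbf{H}_n(z):=f_n(z(T))+\int_0^T F_n(z(s))\,ds\in\mathsf{H}^R$, and $\mu_n:=\mu_{\mathsf{s},\mathbf{H}_n}\in\mathring{\mathfrak{M}}$. Since $e^{-\mathbf{H}_n(z)}\downarrow e^{-\mathbf{H}_\infty(z)}$ pointwise on $C(R,\mathbb{R})$ by the monotonicity above, the monotone convergence theorem gives $Z_n:=\int e^{-\mathbf{H}_n}\,d\gamma_{\mathsf{s}}\downarrow h_\beta(x_*)>0$ (the limit identified using $\int M_\beta(T)\,d\gamma_{\mathsf{s}}=1$); Scheff\'{e}'s lemma then yields $L^1(\gamma_{\mathsf{s}})$-convergence of densities, and hence $\mu_n\to\mu_\beta^{[0,T]}$ in total variation, in particular weakly on $C(R,\mathbb{R})$.

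Two further extensions finish the proof. To cover $t\in[0,\infty)$, the laws $\{\mu_\beta^{[0,T]}\}_{T>0}$ are consistent and piece together to define the full $\beta$-DBM law; a diagonal sequence with $T_n\to\infty$ and $n\to\infty$ then provides a locally weak approximation by elements of $\mathring{\mathfrak{M}}$, placing the law in $\mathfrak{M}$. To handle $x_*\in\overline{\mathbb{W}}^N\setminus\mathbb{W}^N$, approximate $x_*$ by $x_*^\epsilon=x_*+\epsilon(N,N-1,\dots,1)\in\mathbb{W}^N$ and use the weak continuity of $\beta$-DBM in its initial condition, a consequence of the existence/uniqueness theory cited in Section~1, together with the fact that $\mathbf{LC}$ is closed under locally weak limits. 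The main obstacle I anticipate is the convergence $\mu_n\to\mu_\beta^{[0,T]}$: while the monotone structure of $\psi_n,\chi_n$ makes the argument clean in principle, verifying the monotonicity through the kink at $u=1/n$ and pinning down $Z_\infty=h_\beta(x_*)>0$ genuinely require $x_*\in\mathbb{W}^N$, which is precisely why the boundary approximation is needed as a separate second step.
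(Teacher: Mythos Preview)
Your approach matches the paper's almost exactly: the paper uses precisely the same affine continuations of $-\log u$ and $u^{-2}$ below a cutoff (called $q_{1,\varepsilon}$ and $q_{2,\varepsilon}$ there), reduces to $x_*\in\mathbb{W}^N$ by approximation, and passes from $[0,T]$ to $[0,\infty)$ the same way.

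There is, however, a genuine gap at $\beta=2$. In that case the coefficient $\tfrac{\beta(\beta-2)}{4}$ vanishes, so your $F_n\equiv 0$ and your approximating Hamiltonian reduces to $\mathbf{H}_n(z)=f_n(z(T))$, which depends only on the terminal point. Then $e^{-\mathbf{H}_n(z)}$ converges to $h_2(z(T))\mathbbm{1}\{z(T)\in\mathbb{W}^N\}$, \emph{not} to $h_2(z(T))\mathbbm{1}\{z(s)\in\mathbb{W}^N\ \forall s\in[0,T]\}$: a Brownian path that exits $\mathbb{W}^N$ and returns before time $T$ is not penalized at all by your $\mathbf{H}_n$. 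Such paths have positive $\gamma_{\mathsf{s}}$-measure, so your pointwise limit is not $h_\beta(x_*)\,d\mu_\beta^{[0,T]}/d\gamma_{\mathsf{s}}$, the identification $Z_n\downarrow h_\beta(x_*)$ fails, and $\mu_n$ converges to the wrong measure. For $\beta>2$ this problem does not arise because $\int_0^T F_n(z(s))\,ds\to+\infty$ whenever the path leaves $\mathbb{W}^N$, which enforces the full non-intersection constraint.

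The paper addresses exactly this point by treating $\beta=2$ separately: it introduces an artificial convex barrier $q_{3,\varepsilon}(s)=-\varepsilon^{-1}s\,\mathbbm{1}\{s<0\}$ and sets $V_{2,\varepsilon}(x)=\sum_{i<j}q_{3,\varepsilon}(x_i-x_j)$, so that the integrated potential term blows up whenever the path exits $\mathbb{W}^N$. Adding such a term to your $F_n$ when $\beta=2$ fixes the argument.
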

With Proposition~\ref{prop:DBM}, we present the proofs for Theorem~\ref{thm:1} and Corollary~\ref{prop:3}.
\begin{proof}[Proof of Theorem~\ref{thm:1}]
This is a direct consequence of Theorem~\ref{thm:2} and Proposition~\ref{prop:DBM}.
\end{proof}
\begin{proof}[Proof of Corollary~\ref{prop:3}]
Since the Air$\textup{y}_\beta$ line ensemble is the distributional limit of $\beta$-Dyson Brownian motions under the scaling \eqref{equ:Airybeta} \cite{landon2020edgescalinglimitdyson,huang2024convergenceframeworkairybetaline,gorin2024}, we have from Proposition~\ref{prop:DBM} that $\sqrt{\beta/2}\mathcal{A}^{\beta}(t)$ is in the class $\mathbf{LC}$. Then the assertion follows from Theorem~\ref{thm:2} and the stationarity of $\mathcal{A}^{\beta}(t)$.
\end{proof}
In order to prove Proposition~\ref{prop:DBM}, we first calculate the Radon-Nikodym derivative of the law of a $\beta$-Dyson Brownian motion with respect to a Wiener measure. Recall that $h_\beta$ and $V_\beta$ are defined in \eqref{def:h_beta} and \eqref{def:V_beta} as
\begin{align*} 
h_\beta(x)=\left\{\begin{array}{cc}
\prod_{1\leq i<j\leq N}|x_i-x_j|^{\beta/2}, & x\in\mathbb{W}^N,\\
0, & x\notin \mathbb{W}^N,
\end{array} \right.
\end{align*}
and
\begin{align*} 
V_\beta(x)=\left\{ \begin{array}{cc}
\frac{\beta(\beta-2)}{4}\sum_{1\leq i<j\leq N}\frac{1}{(x_i-x_j)^2}, & x\in\mathbb{W}^N,\\
\infty, & x\notin \mathbb{W}^N.
\end{array} \right.
\end{align*}

\begin{lemma}\label{lem:DBMGirsanov}
Let $X(t)$ be the $\beta$-Dyson Brownian motion starting at  $x_*\in\mathbb{W}^N$ with $\beta\geq 1$. Fix $t_0>0$, let $\mu$ be the marginal law of $X(t)$ on $t\in [0,t_0]$ and let $\gamma$ be the marginal law on $[0,t_0]$ of independent Brownian motions starting at $x_*$. Then $\mu$ is absolutely continuous with respect to $\gamma$. Moreover, the Radon-Nikodym derivative is given by
\begin{align}\label{equ:RN_DBM}
\frac{d\mu}{d\gamma}(z)=\left\{ \begin{array}{cc}
\displaystyle \frac{h_\beta(z(t_0))}{h_{\beta}(z(0))}\exp\left( -\int_0^{t_0} V_\beta(z(s))\, ds \right), & z(t)\in \mathbb{W}^N\ \textup{for all}\ t\in [0,t_0],\\
0, & \textup{otherwise}.
\end{array} \right.
\end{align}
\begin{proof}
Let $X(t)=(X_1(t),X_2(t),\dots, X_N(t))$ be an $N$-dimensional Brownian motion starting at $x_*$ defined on a filtered probability space $(\Omega,\mathcal{F}_t,\mathbb{P}_0)$. For $n\geq 1$, define the stopping times
$$\zeta_n= \inf\left\{t\geq 0 \, :\, \min_{1\leq i\leq N-1}\left| X_i(t)-X_{i+1}(t)\right|\leq 1/n\ \textup{or}\ \max_{1\leq i\leq N}|X_i(t)|\geq n\right\}.$$
From the above definition, $\zeta_n$ is non-decreasing in $n$ and we write $\zeta=\lim_{n\to\infty}\zeta_n$. It is direct to check that $$\zeta=\inf\left\{t\geq 0 \, :\, \min_{1\leq i\leq N-1}\left| X_i(t)-X_{i+1}(t)\right|= 0\right\}.$$
Define 
\begin{align}\label{equ:Mn}
M^{(n)}(t)=\frac{h_\beta(X(t\wedge \zeta_n ))}{h_{\beta}(X(0))}\exp\left( -\int_0^{t\wedge \zeta_n} V_\beta(X(s))\, ds \right).
\end{align}
A direct calculation yields $\Delta h_\beta(x)=2V_\beta(x) h_\beta(x)$ for $x\in\mathbb{W}^N$. Applying It{\^o}'s formula, we deduce $dM^{(n)}(t)=M^{(n)}(t)dL^{(n)}(t)$ with 
\begin{align*}
L^{(n)}(t)=\int_0^{t\wedge \zeta_n}
\sum_{i=1}^N\left(   \sum_{j\neq i}\frac{\beta/2 }{X_i(s)-X_j(s)}  \right) dX_i(s).
\end{align*}
This implies $M^{(n)}(t)$ is a $\mathbb{P}_0$-local martingale. From the definition of $\zeta_n$, $M^{(n)}(t)$ is locally bounded in $t$. Hence $M^{(n)}(t)$ is a true $\mathbb{P}_0$-martingale. Let $\mathbb{P}^{(n)}$ be the measure obtained through tilting $\mathbb{P}_0$ by $M^{(n)}(t)$. From the Girsanov theorem \cite[Theorem 1.7, Chapter VIII] {MR1725357}, there is a $ {\mathbb{P}}^{(n)}$-Brownian motion $B^{(n)}(t)$ such that $X(t)$ solves \eqref{equ:DBM} for $t\leq \zeta_n$ with $B(t)$ replaced by $B^{(n)}(t)$. Since a $\beta$-Dyson Brownian motion does not collide for $\beta\geq 1$ \cite[Lemma 1]{MR1217451} , we have $\lim_{n\to\infty} {\mathbb{P}}^{(n)}(\zeta_n\leq t)=0 $ for any $t\in [0,\infty)$. From \cite[Theorem 1.3.5]{MR2190038}, there exists a probability measure $ {\mathbb{P}}$ such that $ {\mathbb{P}}= {\mathbb{P}}^{(n)}$ on $\mathcal{F}_{\zeta_n}$. This implies there exists a $ {\mathbb{P}}$-Brownian motion $B(t)$ such that $X(t)$ solves \eqref{equ:DBM}. Moreover, from \eqref{equ:Mn} we have
\begin{align*}
\frac{d {\mathbb{P}}}{d\mathbb{P}_0}\bigg|_{\mathcal{F}_{t_0}} =\left\{ \begin{array}{cc}
\displaystyle\frac{h_\beta(X(t_0))}{h_{\beta}(X(0))}\exp\left( -\int_0^{t_0} V_\beta(X(s))\, ds \right), & t_0<\zeta,\\
0, & \zeta\leq t_0.
\end{array} \right.
\end{align*}
for any $t_0> 0$. This concludes \eqref{equ:RN_DBM}.
\end{proof}
\end{lemma}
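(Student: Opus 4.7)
The plan is to derive \eqref{equ:RN_DBM} through a standard Girsanov change of measure, localized to avoid the singularity of $h_\beta$ and $V_\beta$ on $\partial\mathbb{W}^N$, and then pass to the limit using non-collision of $\beta$-Dyson Brownian motion for $\beta\geq 1$.

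\textbf{Setup and the candidate density.} Start with independent Brownian motions $X(t)=(X_1(t),\ldots,X_N(t))$ starting at $x_\ast\in\mathbb{W}^N$ on a filtered probability space $(\Omega,\mathcal{F}_t,\mathbb{P}_0)$. The candidate Radon--Nikodym derivative is
\begin{align*}
M(t)=\frac{h_\beta(X(t))}{h_\beta(x_\ast)}\exp\Bigl(-\int_0^t V_\beta(X(s))\,ds\Bigr),
\end{align*}
which is well-defined as long as $X$ stays in $\mathbb{W}^N$. The key algebraic identity is the classical computation $\tfrac12\Delta h_\beta=V_\beta\,h_\beta$ on $\mathbb{W}^N$; this can be verified directly from $\log h_\beta=\tfrac{\beta}{2}\sum_{i<j}\log(x_i-x_j)$ by differentiating twice and using the antisymmetry $\tfrac{1}{x_i-x_k}\bigl(\tfrac{1}{x_j-x_k}-\tfrac{1}{x_j-x_i}\bigr)$-type cancellations. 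Once this is in hand, It\^o's formula applied to $\log h_\beta(X(t))$ on $\mathbb{W}^N$ shows that the drift of $\log M(t)$ vanishes, so $M$ is a continuous local martingale with stochastic logarithm $dL(t)=\sum_{i=1}^N\sum_{j\neq i}\tfrac{\beta/2}{X_i(t)-X_j(t)}\,dX_i(t)$.

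\textbf{Localization and Girsanov.} Introduce the stopping times
\begin{align*}
\zeta_n=\inf\Bigl\{t\geq 0:\min_{i}|X_i(t)-X_{i+1}(t)|\leq 1/n\ \text{or}\ \max_i|X_i(t)|\geq n\Bigr\},
\end{align*}
so that on $\{t\leq\zeta_n\}$ both $h_\beta(X(\cdot))$ and $V_\beta(X(\cdot))$ are uniformly bounded and bounded away from $0$. Hence $M^{(n)}(t):=M(t\wedge\zeta_n)$ is a bounded non-negative $\mathbb{P}_0$-martingale with $\mathbb{E}_{\mathbb{P}_0}[M^{(n)}(t)]=1$. Let $\mathbb{P}^{(n)}$ denote the tilted measure $dM^{(n)}\cdot d\mathbb{P}_0$ on $\mathcal{F}_{\zeta_n}$. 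By Girsanov, under $\mathbb{P}^{(n)}$ the process $B^{(n)}_i(t)=X_i(t)-x_{\ast,i}-\int_0^{t\wedge\zeta_n}\tfrac{\beta}{2}\sum_{j\neq i}\tfrac{ds}{X_i(s)-X_j(s)}$ is a Brownian motion, so $X$ satisfies the $\beta$-DBM SDE \eqref{equ:DBM} up to time $\zeta_n$.

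\textbf{Passage to the limit and identification.} The measures $\{\mathbb{P}^{(n)}\}$ are consistent on $\{\mathcal{F}_{\zeta_n}\}$ (a direct check from the tower property of $M^{(n)}$), so the Parthasarathy / Stroock--Varadhan extension argument (\cite[Theorem 1.3.5]{MR2190038}, as invoked below) produces a probability measure $\mathbb{P}$ on $\mathcal{F}_\infty$ with $\mathbb{P}|_{\mathcal{F}_{\zeta_n}}=\mathbb{P}^{(n)}|_{\mathcal{F}_{\zeta_n}}$. Writing $\zeta=\lim_n\zeta_n$, the non-collision property of $\beta$-Dyson Brownian motion for $\beta\geq 1$ (e.g.\ \cite[Lemma 1]{MR1217451}) applied to the solution under $\mathbb{P}^{(n)}$ (up to $\zeta_n$) implies $\mathbb{P}(\zeta<\infty)=0$, so under $\mathbb{P}$ the process $X$ is a global solution to \eqref{equ:DBM}, hence by pathwise uniqueness equal in law to the $\beta$-DBM $\mu$. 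Finally, for any bounded $\mathcal{F}_{t_0}$-measurable functional $\Phi$ of the path up to $t_0$, monotone convergence in $n$ gives
\begin{align*}
\mathbb{E}_\mu[\Phi]=\mathbb{E}_{\mathbb{P}}[\Phi\,\mathbf 1_{t_0<\zeta}]=\lim_n\mathbb{E}_{\mathbb{P}_0}[\Phi\,M^{(n)}(t_0)\,\mathbf 1_{t_0<\zeta_n}]=\mathbb{E}_{\mathbb{P}_0}[\Phi\,M(t_0)\,\mathbf 1_{t_0<\zeta}],
\end{align*}
which is exactly the claimed formula once we observe $\mathbf 1_{t_0<\zeta}=\mathbf 1_{\{X(s)\in\mathbb{W}^N\text{ for all }s\in[0,t_0]\}}$ a.s.\ under $\mathbb{P}_0$.

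\textbf{Main obstacle.} The delicate point is that neither $h_\beta$ nor $V_\beta$ is bounded on $\mathbb{W}^N$, so $M(t)$ is only a local martingale a priori; the whole argument rests on choosing the localizing sequence $\zeta_n$ so that $M^{(n)}$ is a genuine bounded martingale and simultaneously on knowing that after the tilt the particles do not collide, i.e.\ $\zeta_n\uparrow\infty$ under the limiting tilted law. This is precisely where $\beta\geq 1$ is used, through the classical non-collision result for $\beta$-DBM; without this input one cannot rule out mass loss at the boundary when passing from $\mathbb{P}^{(n)}$ to $\mathbb{P}$.
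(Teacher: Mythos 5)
Your proposal is correct and follows essentially the same route as the paper: the identity $\tfrac12\Delta h_\beta=V_\beta h_\beta$, the localization $\zeta_n$, Girsanov on $\mathcal{F}_{\zeta_n}$, the non-collision result for $\beta\geq1$, and the Stroock--Varadhan extension, with your final monotone-convergence identification just making explicit what the paper states directly. The only slight imprecision is calling $M^{(n)}$ a bounded martingale: for $1\leq\beta<2$ the potential $V_\beta$ is negative, so on $\{t\leq\zeta_n\}$ one only gets bounds growing in $t$ (i.e.\ $M^{(n)}$ is locally bounded in $t$, as the paper says), which still suffices to upgrade the local martingale to a true martingale.
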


\begin{proof}[Proof of Proposition~\ref{prop:DBM}]
Since $\mathbf{LC}$ is closed under the convergence in distribution, we may assume $x_*\in\mathbb{W}^N$ without loss of generality \cite[Proposition 4.3.5]{MR2760897}. From now on, we assume $X(t)$ is the $\beta$-Dyson Brownian motion starting at $x_*\in\mathbb{W}^N$ with $\beta\geq 2$. Our goal is to prove $X(t)$ belongs to $\mathbf{LC}$. 

It suffices to show that for all $t_0>0$, the marginal law of $X(t)$ on $t\in[0,t_0]$, denoted by $\mu$, belongs to $\mathfrak{M}$. From Lemma~\ref{lem:DBMGirsanov}, the Radon-Nikodym derivative of $\mu$ with respect to a Wiener measure $\gamma$ is given by
\begin{align*}
\frac{d\mu}{d\gamma}(z)=\left\{ \begin{array}{cc}
\displaystyle \frac{h_\beta(z(t_0))}{h_{\beta}(z(0))}\exp\left( -\int_0^{t_0} V_\beta(z(s))\, ds \right), & z(t)\in \mathbb{W}^N\ \textup{for all}\ t\in [0,t_0],\\
0, & \textup{otherwise}.
\end{array} \right.
\end{align*}
For $\beta\geq 2$, $-\log h_\beta(x)$ and $V_\beta(x)$ are both convex for $x\in\mathbb{W}^N$. We now approximate $-\log h_\beta(x)$ and $V_\beta(x)$ by convex functions on $\mathbb{R}^N$.  For $\varepsilon\in (0,1)$, set
\begin{align*}
q_{1,\varepsilon}(s)=\left\{ \begin{array}{cc}
-\log s, & s\geq \varepsilon,\\
-\log \varepsilon-\varepsilon^{-1}(s-\varepsilon), & s<\varepsilon,
\end{array} \right. 
\end{align*} 
and
\begin{align*}
q_{2,\varepsilon}(s)=\left\{ \begin{array}{cc}
s^{-2}, & s\geq \varepsilon,\\
  \varepsilon^{-2}-2\varepsilon^{-3}(s-\varepsilon), & s<\varepsilon,
\end{array} \right.\qquad  q_{3,\varepsilon}(s)=\left\{ \begin{array}{cc}
0, & s\geq 0,\\
  -\varepsilon^{-1}s, & s<0.
\end{array} \right.
\end{align*}
Define $h_{\beta,\varepsilon}(x)$ such that for $x\in\mathbb{R}^N$, $-\log h_{\beta,\varepsilon}(x)= \frac{\beta}{2}\sum_{i<j}q_{1,\varepsilon}(x_i-x_j).$ Also define $V_{\beta,\varepsilon}(x)$ by
\begin{align}\label{equ:04010127}
\ V_{\beta,\varepsilon}(x)=\left\{\begin{array}{cc}
4^{-1}{\beta(\beta-2)} \sum_{i<j}q_{2,\varepsilon}(x_i-x_j), & \beta>2,\\
\sum_{i<j}q_{3,\varepsilon}(x_i-x_j), & \beta=2.
\end{array} \right.  
\end{align}
Let $\mathbf{H}_\varepsilon$ be the corresponding Hamiltonian
\begin{align*}
\mathbf{H}_\varepsilon(z)=-\log h_{\beta,\varepsilon}(z(t_0))+\int_0^{t_0} V_{\beta,\varepsilon}(z(s))\, ds 
\end{align*}
and let $\mu_\varepsilon$ be the probability measure defined through $d\mu_\varepsilon/d\gamma\propto e^{-\mathbf{H}_\varepsilon}$. Since $-\log h_{\beta,\varepsilon}(x)$ and $V_{\beta,\varepsilon}(x)$ are convex, $\mu_\varepsilon\in\mathring{\mathfrak{M}}$. It is straightforward to check that for $\gamma$-a.s. $z\in C(\llbracket 1,N \rrbracket\times[0,t_0],\mathbb{R})$, $e^{-\mathbf{H}_{\varepsilon}(z)}\searrow h_\beta(x_*)\frac{d\mu}{d\gamma}(z)$ when $\varepsilon\to 0+$. It then follows from the dominated convergence theorem that $\mu_\varepsilon$ converges weakly to $\mu$. Hence $\mu\in\mathfrak{M}$ and $X(t)$ belongs to $\mathbf{LC}$.
\end{proof} 


{\color{black}\begin{lemma}
Dyson Brownian bridges, i.e., Brownian bridges conditioned to be non-intersecting, are in the class $\mathbf{LC}$.
\begin{proof}
By approximation, it suffices to prove that Dyson Brownian bridges with strictly ordered boundary data are in the class $\mathbf{LC}$. Fix $R = \llbracket k, \ell \rrbracket \times [a, b] \subset \mathbb{Z} \times \mathbb{R}$, $x = (x_k, x_{k+1}, \dots, x_\ell)$, and $y = (y_k, y_{k+1}, \dots, y_\ell)$ with 
$x_k > x_{k+1} > \dots > x_\ell$ and $ \quad y_k > y_{k+1} > \dots > y_\ell. $ Set $\breve{s} = (R, x, y, 0)$. The law of independent Brownian bridges on $[a, b]$ with boundary data $x$ and $y$ is given by $\gamma_{\breve{s}}$. For $\varepsilon \in (0, 1)$, take $V_{2, \varepsilon}(s)$ as in \eqref{equ:04010127} and $\mathbf{H}_\varepsilon(z) = \int_a^b V_{2, \varepsilon}(z(s)) \, ds$. The convexity of $V_{2, \varepsilon}$ implies $\mu_{\breve{s}, \mathbf{H}_\varepsilon}$ belongs to $\mathring{\mathfrak{M}}$. By taking $\varepsilon \to 0$, we deduce that the Dyson Brownian bridges with strictly ordered boundary data belong to $\mathbf{LC}$.  
\end{proof}
\end{lemma}}


\subsection{O'Connell-Yor line ensemble and KPZ line ensemble}\label{sec:3.2}
In this subsection, we show that both the O'Connell-Yor line ensemble and the KPZ line ensemble belong to $\mathbf{LC}$. As a consequence, we prove Corollary~\ref{prop:5}, a modulus of continuity estimate for the KPZ line ensemble.

Fix $N\geq 2$. The O'Connell-Yor line ensemble \cite{MR2952082} with index $\lambda=(\lambda_1,\dots,\lambda_N)\in\mathbb{R}^N$, denoted by $Y^{\lambda}(t)$, is the diffusion process on $\mathbb{R}^N$ obtained by applying the geometric RSK correspondence to the $N$-dimensional Brownian motion with drift $\lambda$. It is proved in \cite[Theorem 3.1]{MR2952082} that $Y^{\lambda}(t)$ solves the SDE
\begin{align}\label{equ:OClambda}
dY_j^{\lambda}(t)=\nabla_j\log \psi_\lambda (Y^{\lambda}(t))dt+dB_j(t), 1\leq j\leq N,
\end{align}
where $\psi_\lambda$ is $\mathfrak{gl}_{N}$-Whittaker function. The Whittaker function $\psi_\lambda$ is an eigenfunction of the quantum Toda Hamiltonian
\begin{align*}
\frac{1}{2}\Delta \psi_\lambda (x) = \left( \sum_{i=1}^{N-1} e^{x_{i+1}-x_i}+\frac{1}{2}\sum_{i=1}^N\lambda_i^2\right) \psi_\lambda	(x).
\end{align*}
We refer the readers to \cite[Section 2]{MR2952082} for a more detailed description of $\psi_\lambda$.
\begin{proposition}\label{prop:OY}
Fix $N\in\mathbb{N}$ and $\lambda\in\overline{\mathbb{W}}^N$. Then the O'Connell-Yor line ensemble with index $\lambda$ is in the class $\mathbf{LC}$.
\end{proposition}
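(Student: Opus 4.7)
The plan is to follow the template of Proposition~\ref{prop:DBM}: identify the marginal law of $Y^\lambda$ on any finite interval $[0,t_0]$, call it $\mu^\lambda_{t_0}$, with a measure in $\mathring{\mathfrak{M}}$ via Girsanov's theorem, and then let $t_0\to\infty$. Let $X$ be an $N$-dimensional Brownian motion starting at some $y_*\in\mathbb{R}^N$ under $\mathbb{P}_0$, and consider the candidate Girsanov density
\begin{align*}
M(t)=\frac{\psi_\lambda(X(t))}{\psi_\lambda(X(0))}\exp\left(-\int_0^t\sum_{i=1}^{N-1}e^{X_{i+1}(s)-X_i(s)}\,ds-\sum_{i=1}^N\lambda_i^2\,t\right).
\end{align*}
It\^o's formula, combined with the Toda eigenfunction equation $\tfrac12\Delta\psi_\lambda=(\sum_{i=1}^{N-1}e^{x_{i+1}-x_i}+\sum_i\lambda_i^2)\psi_\lambda$, shows $M(t)$ is a positive local martingale. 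Upgrading $M$ to a true martingale is done by a localization argument analogous to the one in Lemma~\ref{lem:DBMGirsanov}, using non-explosion of the O'Connell--Yor diffusion on $\mathbb{R}^N$ (the drift $\nabla\log\psi_\lambda$ is smooth on all of $\mathbb{R}^N$). Under the tilted measure, $X$ solves \eqref{equ:OClambda}, so $M(t_0)=d\mu^\lambda_{t_0}/d\gamma$, with $\gamma$ the Wiener measure starting at $y_*$.

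This identifies $\mu^\lambda_{t_0}=\mu_{\mathsf{s},\mathbf{H}}$ for $\mathsf{s}=(\llbracket 1,N\rrbracket\times[0,t_0],\,y_*,\,0)$ and
\begin{align*}
\mathbf{H}(z)=-\log\psi_\lambda(z(t_0))+\int_0^{t_0}\sum_{i=1}^{N-1}e^{z_{i+1}(s)-z_i(s)}\,ds,
\end{align*}
up to an additive constant absorbed into the normalizing factor. To verify $\mathbf{H}\in\mathsf{H}^R$, the integrand $\sum e^{z_{i+1}-z_i}$ is manifestly convex in $z$, so the only nontrivial input is the convexity of $-\log\psi_\lambda$ on $\mathbb{R}^N$, i.e.\ log-concavity of the $\mathfrak{gl}_N$-Whittaker function. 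I would derive this from Givental's integral representation, which expresses $\psi_\lambda(x)$ as the integral, over a triangular array $T$ with top row $x$, of $\exp(-\mathcal{F}_\lambda(T))$, where $\mathcal{F}_\lambda$ is a sum of terms linear in $\lambda$ and in the entries of $T$ together with exponentials of differences of adjacent entries. Each such summand is convex in $(x,T)$, so the Pr\'ekopa--Leindler inequality applied to the marginalization over the non-top-row entries yields log-concavity of $\psi_\lambda(x)$.

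With these two ingredients, $\mathbf{H}\in\mathsf{H}^R$ and $\mu^\lambda_{t_0}\in\mathring{\mathfrak{M}}$ for every $t_0>0$. In contrast to Proposition~\ref{prop:DBM}, no regularization of $\mathbf{H}$ is needed, since both $-\log\psi_\lambda$ and $\sum e^{z_{i+1}-z_i}$ are everywhere finite and convex on $\mathbb{R}^N$. Because the class $\mathbf{LC}$ is closed under convergence in distribution, letting $t_0\to\infty$ then gives that the full law of $Y^\lambda$ lies in $\mathfrak{M}$, so $Y^\lambda\in\mathbf{LC}$.

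The main obstacle is the log-concavity of the Whittaker function. Although this is morally expected, the actual proof requires carefully setting up the Givental representation and checking that the integrand is \emph{jointly} log-concave in the top-row variables $x$ and the remaining entries of $T$, so that Pr\'ekopa--Leindler produces log-concavity of the marginal rather than just of each slice. The Girsanov step itself is standard but, as in Lemma~\ref{lem:DBMGirsanov}, requires the usual stopping-time care to promote the positive local martingale to a true martingale.
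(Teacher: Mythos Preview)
Your argument has a genuine gap at the very first step: the O'Connell--Yor line ensemble $Y^\lambda$ is defined as the image of Brownian motion with drift under the geometric RSK map, and for $N\ge 2$ it does \emph{not} start from a fixed point $y_*\in\mathbb{R}^N$ at $t=0$; rather it enters from infinity via an entrance law. Your Girsanov computation correctly identifies the law of the SDE~\eqref{equ:OClambda} \emph{started at a deterministic $y_*$} with a measure in $\mathring{\mathfrak{M}}$, but that object is not $Y^\lambda$. The paper separates the two issues: Lemma~\ref{lem:qtinm} handles solutions of~\eqref{equ:OClambda} from a fixed initial point (essentially your argument), and then the proof of the proposition takes a specific sequence of initial points $x_*^{(k)}$ tending to infinity and cites \cite[Section~8]{MR2952082} for the convergence of these solutions to $Y^\lambda$ in distribution. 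Closure of $\mathbf{LC}$ under distributional limits then finishes. Your ``let $t_0\to\infty$'' at the end does not address this; the limit you need is in the initial condition, not in the time horizon.

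On the other hand, your route to the log-concavity of $\psi_\lambda$ via Givental's integral and Pr\'ekopa--Leindler is a genuinely different, and perfectly valid, alternative to what the paper does. The paper instead works with $j_\lambda(x)\propto e^{-\langle\lambda,x\rangle}\psi_\lambda(x)$ (which is log-concave iff $\psi_\lambda$ is), writes it via the Feynman--Kac representation~\eqref{def:j_lambda}, and obtains log-concavity in $x$ directly from Lemma~\ref{lem:logconvexmarginal} by sending $T\to\infty$. The paper's argument is more self-contained (it reuses the machinery already built for Proposition~\ref{pro:main}), while yours avoids the auxiliary $j_\lambda$ and the drift $\lambda t$ in $\mathsf{s}$ at the cost of importing the Givental formula. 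Either works for this step; the missing piece in your proposal is solely the entrance-law approximation.
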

With Proposition~\ref{prop:OY}, we can prove Corollary~\ref{prop:5}.
\begin{proof}[Proof of Corollary~\ref{prop:5}]
It is known \cite{CH16,Nic21} that the KPZ line ensemble is the distributional limit of the O'Connell-Yor line ensemble with index $\lambda=\vec{0}$. Together with Proposition \ref{prop:OY}, the KPZ line ensemble belongs to $\mathbf{LC}$. Then the assertion follows from Theorem~\ref{thm:2} and the stationarity of the KPZ line ensemble along the parabola $-2^{-1}t^2$. 
\end{proof}

To prove Proposition~\ref{prop:OY}, we begin by showing the solution to \eqref{equ:OClambda} belongs to $\mathbf{LC}$ provided $\lambda\in\overline{\mathbb{W}}^N$ and it starts at some $x_*\in\mathbb{R}^N$, 

\begin{lemma}\label{lem:qtinm}
Fix $N\in\mathbb{N}$, $x_*\in\mathbb{R}^N$, and $\lambda\in\overline{\mathbb{W}}^N$. Let $X^\lambda(t)$ be the solution to \eqref{equ:OClambda} that starts at $x_*$. Then $X^\lambda(t)$ is in the class $\mathbf{LC}$.
\end{lemma}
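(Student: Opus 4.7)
The plan is to mimic the proof of Proposition~\ref{prop:DBM}. Fix $t_0 > 0$, let $\mu$ denote the marginal law of $X^\lambda$ on $[0, t_0]$, and let $\gamma = \gamma_{\mathsf{s}}$ for $\mathsf{s} = (\llbracket 1, N\rrbracket \times [0, t_0], x_*, 0)$. My goal is to show $\mu \in \mathfrak{M}$, after which $X^\lambda \in \mathbf{LC}$ follows from the definition together with the closure of $\mathbf{LC}$ under distributional convergence.

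The first step is to compute $d\mu/d\gamma$ via Girsanov, in direct parallel with Lemma~\ref{lem:DBMGirsanov}. I interpret the drift in \eqref{equ:OClambda} as the logarithmic derivative of the Whittaker function $\psi_\lambda$. Applying It\^o's formula to $\log \psi_\lambda(X(t))$ and invoking the Toda eigenvalue identity $\frac{1}{2}\Delta \psi_\lambda = (V + |\lambda|^2)\psi_\lambda$ with $V(x) = \sum_{i=1}^{N-1} e^{x_{i+1}-x_i}$, the Girsanov exponential takes the compact form
\begin{align*}
M(t) = \frac{\psi_\lambda(X(t))}{\psi_\lambda(X(0))} \exp\left(-\int_0^t V(X(s))\, ds - t\,|\lambda|^2\right),
\end{align*}
analogous to \eqref{equ:Mbeta}. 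A localization argument via the stopping times $\zeta_n = \inf\{t : |X(t)| \geq n\}$, essentially identical to the one in Lemma~\ref{lem:DBMGirsanov} and relying on non-explosion of $X^\lambda$, confirms that $M(t)$ is a true martingale and gives
\begin{align*}
\frac{d\mu}{d\gamma}(z) = \frac{\psi_\lambda(z(t_0))}{\psi_\lambda(x_*)} \exp\left(-\int_0^{t_0} V(z(s))\, ds - t_0\,|\lambda|^2\right).
\end{align*}

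The second step is to recognize this as $d\mu/d\gamma \propto e^{-\mathbf{H}(z)}$ with
\begin{align*}
\mathbf{H}(z) = -\log \psi_\lambda(z(t_0)) + \int_0^{t_0} V(z(s))\, ds,
\end{align*}
and to verify that $\mathbf{H}$ fits the template of \eqref{equ:H}. The integrand $V(x)$ is a sum of exponentials of linear forms, hence convex, so the time-integral piece is legitimate. For the boundary piece I need $-\log \psi_\lambda$ to be convex, i.e.\ log-concavity of $\psi_\lambda$ on $\mathbb{R}^N$. I would establish this using the Givental integral representation of $\psi_\lambda$: the Whittaker function is an integral over a triangular array $T$ of the exponential of a function that is jointly concave in $(T, x)$ --- the Givental exponent is a linear combination of $\lambda$-weighted linear forms minus exponentials of linear forms in $(T, x)$. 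Pr\'ekopa's theorem \cite[Theorem 13.9]{MR2814377} applied to this representation then yields the log-concavity of $\psi_\lambda(x)$. Finally, to obtain $\mu \in \mathfrak{M}$, I regularize as at the end of the proof of Proposition~\ref{prop:DBM}: linearize each $e^{x_{i+1}-x_i}$ above some threshold to produce convex $V_\varepsilon$ on all of $\mathbb{R}^N$, and mollify $-\log \psi_\lambda$ by an appropriate sup-convolution to obtain a globally convex approximation. The resulting $\mathbf{H}_\varepsilon \in \mathsf{H}^R$ gives $\mu_\varepsilon \in \mathring{\mathfrak{M}}$, and dominated convergence yields $\mu_\varepsilon \to \mu$ weakly.

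The main obstacle is the log-concavity of $\psi_\lambda$. The Girsanov and localization step transfers essentially verbatim from Lemma~\ref{lem:DBMGirsanov}, and convexity of $V$ is transparent. The technical core is the Pr\'ekopa-Leindler argument built on the Givental integral: one must carefully identify the Givental integrand as the exponential of a jointly concave function in the triangular-array variables together with $x$, after which log-concavity of the marginal $\psi_\lambda$ follows cleanly. This joint concavity reduces to the elementary observation that each Givental building block is either an affine function or a negative exponential of an affine function of $(T, x)$.
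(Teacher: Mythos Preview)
Your approach is correct but takes a different route from the paper for the crucial log-concavity step. The paper does not work with $\psi_\lambda$ directly. Instead it introduces $j_\lambda(x) = \mathbb{E}_0\bigl[\exp\bigl(-\int_0^\infty F(X^\lambda(t)-x_*+x)\,dt\bigr)\bigr]$ (proportional to $e^{-\langle\lambda,x\rangle}\psi_\lambda(x)$), takes the reference measure to be Brownian motion with drift $\lambda$ (so $\eta(t)=\lambda t$ in $\mathsf{s}$), and then observes that log-concavity of $j_\lambda$ follows from Lemma~\ref{lem:logconvexmarginal} applied to the finite-horizon expectation, followed by $T\to\infty$. This is a pleasant piece of self-reference: the log-concavity of the boundary term is itself an instance of the paper's own machinery. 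Your route through the Givental integral and Pr\'ekopa is valid and perhaps the more ``classical'' argument, but it imports an external representation rather than reusing what has already been built. A side benefit of the paper's choice is that $j_\lambda\in(0,1]$, so the Girsanov density is bounded and the martingale property is immediate; your localization via $\zeta_n$ is not needed there.

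One place where you have over-engineered: the regularization step at the end is unnecessary. Both $V(x)=\sum_i e^{x_{i+1}-x_i}$ and $-\log\psi_\lambda(x)$ are already continuous convex functions on all of $\mathbb{R}^N$, so $\mathbf{H}\in\mathsf{H}^R$ and $\mu\in\mathring{\mathfrak{M}}$ directly, with no approximation. The regularization in Proposition~\ref{prop:DBM} was forced only because $-\log h_\beta$ and $V_\beta$ blow up at $\partial\mathbb{W}^N$; there is no analogous singularity in the Toda setting.
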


\begin{proof}
Since $\mathbf{LC}$ is closed under the convergence in distribution, we may assume $\lambda\in\mathbb{W}^N$ without loss of generality. Let $X(t)=(X_1(t),X_2(t),\dots, X_N(t))$ be an $N$-dimensional Brownian motion starting at $x_*$ defined on a filtered probability space $(\Omega,\mathcal{F}_t,\mathbb{P}_0)$. We write $X^\lambda(t)=X(t)+\lambda t$ for the Brownian motion with drift $\lambda$.

Set $F(x)=\sum_{i=1}^{N-1} e^{x_{i+1} -x_i }. $  It is proved in \cite[Proposition 2.2]{MR2884226} that
\begin{align}\label{def:j_lambda}
j_\lambda(x)=\mathbb{E}_0 \left[  \exp\left( -\int_0^\infty F( X^{\lambda}(t)-x_*+x)\, dt \right)\right]
\end{align}
is well-defined for all $x\in\mathbb{R}^N$ and 
\begin{align*}
\frac{1}{2}\Delta j_\lambda (x)+\left\langle \lambda,\nabla j_\lambda (x) \right\rangle= \left( \sum_{i=1}^{N-1} e^{x_{i+1}-x_i}\right) j_\lambda	(x).
\end{align*}
Moreover, $ j_\lambda(x)$ is proportional to $ e^{-\left\langle \lambda,x \right\rangle}\psi_{\lambda}(x)$. Let
\begin{align*}
M(t)=\frac{j_\lambda(X^\lambda(t))}{j_\lambda(X^\lambda(0))}\exp\left( -\int_0^t F(X^\lambda(s) )\, ds.\right).
\end{align*} 
It is straightforward to check that for $M(t)$ is an non-negative $\mathbb{P}_0$-martingale. From It{\^o}'s formula, $dM(t)=M(t)dL(t)$ with 
\begin{align*}
L(t)=\int_0^{t}
\sum_{i=1}^N \nabla_i\log j_\lambda (X^\lambda(s))  dX_i(s).
\end{align*}
Let $ {\mathbb{P}} $ be the measure obtained through tilting $\mathbb{P}_0$ by $M(t)$. From the Girsanov theorem \cite[Theorem 1.7, Chapter VIII] {MR1725357}, under $ {\mathbb{P}}$, $X^\lambda(t)$ is the solution to \eqref{equ:OClambda} which starts at $x_*$. 

The martingale property of $M(t)$ implies that for any $t_0>0,$ $d {\mathbb{P}}/d\mathbb{P}_0|_{\mathcal{F}_{t_0}}=M(t_0)$. Therefore, the $ {\mathbb{P}} $-marginal law of $X^\lambda(t)$ on $C(\llbracket 1,N \rrbracket\times [0,t_0] ,\mathbb{R})$, denoted by $\mu$, is of the form $\mu_{\mathsf{s},\mathbf{H}}$ with $\mathsf{s}=(\llbracket 1,N \rrbracket\times [0,t_0],x_*,\lambda t)$ and 
\begin{align*}
\mathbf{H}(z)=  j_\lambda(z(t_0)) \exp\left( -\int_0^{t_0} F(z(s)  )\, ds.\right)
\end{align*}
It remains to check $F(x)$ is convex and $j_\lambda(x)$ is log-concave. Clearly $F(x)=\sum_{i=1}^{N-1} e^{x_{i+1} -x_i }$ is a convex function. From Lemma~\ref{lem:logconvexmarginal},
$$
\mathbb{E}_0 \left[  \exp\left( -\int_0^T F( X^{\lambda}(t)-x_*+x)\, dt \right)\right]
$$
is log-concave in $x$ for any $T>0$. By taking $T\to\infty$, $ j_\lambda (x)$ is log-concave as well. Therefore, $\mu\in \mathring{\mathfrak{M}}$. Since this holds for all $t_0>0$, we conclude the ${\mathbb{P}}$-law of $X^\lambda(t)$ belongs to $\mathfrak{M}$ and $X^\lambda\in\mathbf{LC}$.
\end{proof}

\begin{proof}[Proof of Proposition~\ref{prop:OY}]
Let $x^{(k)}_*=\left(  \frac{k(-N+1)}{2}, \frac{k(-N+3)}{2},\dots, \frac{k(N-1)}{2} \right)$ and let $X^{(k)}(t)$ be the the solution to \eqref{equ:OClambda} that starts at $x_*^{(k)}$. It is shown in \cite[Section 8]{MR2952082} that $X^{(k)}$ converges to $Y^{\lambda}$ in distribution. From Lemma~\ref{lem:qtinm}, $X^{(k)}$ is in the class $\mathbf{LC}$. Since $\mathbf{LC}$ is closed under the convergence in distribution, $Y^{\lambda}$ is also in the class $\mathbf{LC}$.
\end{proof}

\bibliographystyle{alpha} 
\bibliography{CLE.bib}

\newcommand{\noopsort}[1]{} \newcommand{\singleletter}[1]{#1}
\begin{thebibliography}{AFvM10}

\bibitem[AFvM10]{MR2642890}
Mark Adler, Patrik~L. Ferrari, and Pierre van Moerbeke.
\newblock Airy processes with wanderers and new universality classes.
\newblock {\em Ann. Probab.}, 38(2):714--769, 2010.

\bibitem[AGZ10]{MR2760897}
Greg~W. Anderson, Alice Guionnet, and Ofer Zeitouni.
\newblock {\em An introduction to random matrices}, volume 118 of {\em
  Cambridge Studies in Advanced Mathematics}.
\newblock Cambridge University Press, Cambridge, 2010.

\bibitem[AH23]{aggarwal2023}
Amol Aggarwal and Jiaoyang Huang.
\newblock Strong characterization for the {A}iry line ensemble.
\newblock 2023.
\newblock arXiv:2308.11908.

\bibitem[Bil99]{Bil}
P.~Billingsley.
\newblock {\em Convergence of probability measures}.
\newblock Wiley Series in Probability and Statistics: Probability and
  Statistics. John Wiley \& Sons, Inc., New York, second edition, 1999.
\newblock A Wiley-Interscience Publication.

\bibitem[BKM24]{baslingker}
Jnaneshwar Baslingker, Manjunath Krishnapur, and Mokshay Madiman.
\newblock Log-concavity in one-dimensional coulomb gases and related ensembles.
\newblock 2024.
\newblock arXiv:2412.15116.

\bibitem[BO11]{MR2884226}
Fabrice Baudoin and Neil O'Connell.
\newblock Exponential functionals of {B}rownian motion and class-one
  {W}hittaker functions.
\newblock {\em Ann. Inst. Henri Poincar\'e{} Probab. Stat.}, 47(4):1096--1120,
  2011.

\bibitem[Caf00]{MR1800860}
Luis~A. Caffarelli.
\newblock Monotonicity properties of optimal transportation and the {FKG} and
  related inequalities.
\newblock {\em Comm. Math. Phys.}, 214(3):547--563, 2000.

\bibitem[Car03]{MR2004294}
John Cardy.
\newblock Stochastic {L}oewner evolution and {D}yson's circular ensembles.
\newblock {\em J. Phys. A}, 36(24):L379--L386, 2003.

\bibitem[CH14]{MR3152753}
Ivan Corwin and Alan Hammond.
\newblock Brownian {G}ibbs property for {A}iry line ensembles.
\newblock {\em Invent. Math.}, 195(2):441--508, 2014.

\bibitem[CH16]{CH16}
Ivan Corwin and Alan Hammond.
\newblock K{PZ} line ensemble.
\newblock {\em Probab. Theory Related Fields}, 166(1-2):67--185, 2016.

\bibitem[Dim24]{dimitrov2024}
Evgeni Dimitrov.
\newblock Airy wanderer line ensembles.
\newblock 2024.
\newblock arXiv:2408.08445.

\bibitem[DJD88]{MR954608}
Sudhakar Dharmadhikari and Kumar Joag-Dev.
\newblock {\em Unimodality, convexity, and applications}.
\newblock Probability and Mathematical Statistics. Academic Press, Inc.,
  Boston, MA, 1988.

\bibitem[DM21]{MR4317710}
Evgeni Dimitrov and Konstantin Matetski.
\newblock Characterization of {B}rownian {G}ibbsian line ensembles.
\newblock {\em Ann. Probab.}, 49(5):2477--2529, 2021.

\bibitem[DNV19]{dauvergne2019uniform}
Duncan Dauvergne, Mihai Nica, and B{\'a}lint Vir{\'a}g.
\newblock Uniform convergence to the airy line ensemble.
\newblock {\em arXiv preprint}, 2019.

\bibitem[DOV22]{Dauvergne_2022}
Duncan Dauvergne, Janosch Ortmann, and Bálint Virág.
\newblock The directed landscape.
\newblock {\em Acta Mathematica}, 229(2):201–285, 2022.

\bibitem[DV21]{MR4260467}
Duncan Dauvergne and B\'alint Vir\'ag.
\newblock Bulk properties of the {A}iry line ensemble.
\newblock {\em Ann. Probab.}, 49(4):1738--1777, 2021.

\bibitem[DV22]{dauvergne2022scaling}
Duncan Dauvergne and Bálint Virág.
\newblock The scaling limit of the longest increasing subsequence.
\newblock 2022.
\newblock arXiv:2104.08210.

\bibitem[Dys62]{MR148397}
Freeman~J. Dyson.
\newblock A {B}rownian-motion model for the eigenvalues of a random matrix.
\newblock {\em J. Mathematical Phys.}, 3:1191--1198, 1962.

\bibitem[FU04]{MR2036490}
D.~Feyel and A.~S. \"Ust\"unel.
\newblock Monge-{K}antorovitch measure transportation and {M}onge-{A}mp\`ere
  equation on {W}iener space.
\newblock {\em Probab. Theory Related Fields}, 128(3):347--385, 2004.

\bibitem[GRR71]{MR267632}
A.~M. Garsia, E.~Rodemich, and H.~Rumsey, Jr.
\newblock A real variable lemma and the continuity of paths of some {G}aussian
  processes.
\newblock {\em Indiana Univ. Math. J.}, 20:565--578, 1970/71.

\bibitem[GXZ24]{gorin2024}
Vadim Gorin, Jiaming Xu, and Lingfu Zhang.
\newblock Airy$_\beta$ line ensemble and its {L}aplace transform.
\newblock 2024.
\newblock arXiv:2411.10829.

\bibitem[Har04]{MR2095937}
Gilles Harg\'e.
\newblock A convex/log-concave correlation inequality for {G}aussian measure
  and an application to abstract {W}iener spaces.
\newblock {\em Probab. Theory Related Fields}, 130(3):415--440, 2004.

\bibitem[HZ24]{huang2024convergenceframeworkairybetaline}
Jiaoyang Huang and Lingfu Zhang.
\newblock A convergence framework for {A}iry$_\beta$ line ensemble via pole
  evolution.
\newblock 2024.
\newblock arXiv:2411.10586.

\bibitem[Jan97]{MR1474726}
Svante Janson.
\newblock {\em Gaussian {H}ilbert spaces}, volume 129 of {\em Cambridge Tracts
  in Mathematics}.
\newblock Cambridge University Press, Cambridge, 1997.

\bibitem[KPZ86]{KPZ}
Mehran Kardar, Giorgio Parisi, and Yi-Cheng Zhang.
\newblock Dynamic scaling of growing interfaces.
\newblock {\em Phys. Rev. Lett.}, 56:889--892, 3 1986.

\bibitem[Lan20]{landon2020edgescalinglimitdyson}
Benjamin Landon.
\newblock Edge scaling limit of {D}yson {B}rownian motion at equilibrium for
  general $\beta \geq 1$.
\newblock 2020.
\newblock arXiv:2009.11176.

\bibitem[MS24]{MR4797372}
Dan Mikulincer and Yair Shenfeld.
\newblock The {B}rownian transport map.
\newblock {\em Probab. Theory Related Fields}, 190(1-2):379--444, 2024.

\bibitem[Nic21]{Nic21}
Mihai Nica.
\newblock Intermediate disorder limits for multi-layer semi-discrete directed
  polymers.
\newblock {\em Electron. J. Probab.}, 26:Paper No. 62, 50, 2021.

\bibitem[O'C12]{MR2952082}
Neil O'Connell.
\newblock Directed polymers and the quantum {T}oda lattice.
\newblock {\em Ann. Probab.}, 40(2):437--458, 2012.

\bibitem[OW16]{MR3439221}
Neil O'Connell and Jon Warren.
\newblock A multi-layer extension of the stochastic heat equation.
\newblock {\em Comm. Math. Phys.}, 341(1):1--33, 2016.

\bibitem[OY01]{MR1865759}
Neil O'Connell and Marc Yor.
\newblock Brownian analogues of {B}urke's theorem.
\newblock {\em Stochastic Process. Appl.}, 96(2):285--304, 2001.

\bibitem[PS02]{MR1933446}
Michael Pr\"ahofer and Herbert Spohn.
\newblock Scale invariance of the {PNG} droplet and the {A}iry process.
\newblock volume 108, pages 1071--1106. 2002.
\newblock Dedicated to David Ruelle and Yasha Sinai on the occasion of their
  65th birthdays.

\bibitem[Qua14]{MR3204148}
J.~D. Quastel.
\newblock The {K}ardar-{P}arisi-{Z}hang equation and universality class.
\newblock In {\em X{VII}th {I}nternational {C}ongress on {M}athematical
  {P}hysics}, pages 113--133. World Sci. Publ., Hackensack, NJ, 2014.

\bibitem[RRV11]{MR2813333}
Jos\'e{}~A. Ram\'irez, Brian Rider, and B\'alint Vir\'ag.
\newblock Beta ensembles, stochastic {A}iry spectrum, and a diffusion.
\newblock {\em J. Amer. Math. Soc.}, 24(4):919--944, 2011.

\bibitem[RS93]{MR1217451}
L.~C.~G. Rogers and Z.~Shi.
\newblock Interacting {B}rownian particles and the {W}igner law.
\newblock {\em Probab. Theory Related Fields}, 95(4):555--570, 1993.

\bibitem[RY99]{MR1725357}
Daniel Revuz and Marc Yor.
\newblock {\em Continuous martingales and {B}rownian motion}, volume 293 of
  {\em Grundlehren der mathematischen Wissenschaften [Fundamental Principles of
  Mathematical Sciences]}.
\newblock Springer-Verlag, Berlin, third edition, 1999.

\bibitem[Sim11]{MR2814377}
Barry Simon.
\newblock {\em Convexity}, volume 187 of {\em Cambridge Tracts in Mathematics}.
\newblock Cambridge University Press, Cambridge, 2011.
\newblock An analytic viewpoint.

\bibitem[SV06]{MR2190038}
Daniel~W. Stroock and S.~R.~Srinivasa Varadhan.
\newblock {\em Multidimensional diffusion processes}.
\newblock Classics in Mathematics. Springer-Verlag, Berlin, 2006.
\newblock Reprint of the 1997 edition.

\bibitem[Tsa16]{MR3568040}
Li-Cheng Tsai.
\newblock Infinite dimensional stochastic differential equations for {D}yson's
  model.
\newblock {\em Probab. Theory Related Fields}, 166(3-4):801--850, 2016.

\bibitem[TW94]{MR1257246}
Craig~A. Tracy and Harold Widom.
\newblock Level-spacing distributions and the {A}iry kernel.
\newblock {\em Comm. Math. Phys.}, 159(1):151--174, 1994.

\bibitem[Wu22]{Wu21tightness}
Xuan Wu.
\newblock Convergence of the {KPZ} line ensemble.
\newblock {\em International Mathematics Research Notices}, 2022.

\bibitem[Wu23a]{MR4605337}
Xuan Wu.
\newblock The {B}essel line ensemble.
\newblock {\em Electron. J. Probab.}, 28:Paper No. 77, 50, 2023.

\bibitem[Wu23b]{wu2023kpzequationdirectedlandscape}
Xuan Wu.
\newblock The {KPZ} equation and the directed landscape.
\newblock 2023.
\newblock arXiv:2301.00547.

\end{thebibliography}

\end{document}